\newtheorem{theorem}{Theorem}[section]
\newtheorem{lemma}[theorem]{Lemma}
\newtheorem{proposition}[theorem]{Proposition}
\theoremstyle{definition}
\newtheorem{remark}[theorem]{Remark}
\renewcommand{\S}{\mathscr{S}}
\newcommand{\R}{\mathbb{R}}
\newcommand{\supp}{\mathop{\textup{supp}}}
\newcommand{\singsupp}{\mathop{\textup{singsupp}}}
\newcommand{\WF}{\mathop{\textup{WF}}}
\newcommand{\Char}{\mathop{\textup{Char}}}
\newcommand{\sgn}{\mathop{\textup{sgn}}}
\newcommand{\F}{\mathcal{F}}
\begin{document}

\title{Solvability and microlocal analysis of the\\ fractional Eringen wave equation}
\author{
 G\"{u}nther H\"{o}rmann\thanks{Faculty of Mathematics, University of Vienna, Oskar-Morgenstern-Platz 1, 1090 Wien, Austria,
 guenther.hoermann@univie.ac.at}, 
 Ljubica Oparnica\thanks{Faculty of Education in Sombor, University of Novi Sad, Podgori\v{c}ka 4, 25000 Sombor, Serbia,
 ljubica.oparnica@gmail.com},
 Du\v{s}an Zorica\thanks{Mathematical Institute, Serbian Academy of Arts and Sciences, Kneza Mihaila 36, 11000 Belgrade, Serbia, dusan\textunderscore zorica@mi.sanu.ac.rs
 and Department of Physics, Faculty of Sciences, University of Novi Sad, Trg D. Obradovi\'{c}a 4, 21000 Novi Sad, Serbia}}

\maketitle

\begin{abstract}
\noindent We discuss unique existence and microlocal regularity properties of Sobolev space solutions to the fractional Eringen wave equation, initially given in the form of a system of equations in which the classical non-local Eringen constitutive equation is generalized by employing space-fractional derivatives. Numerical examples illustrate the shape of solutions in dependence of the order of the space-fractional derivative. \\
\textbf{Key words}: wave front set, Eringen constitutive equation, Cauchy problem, fractional derivatives, distributional solutions\\
\textbf{MSC Classification}:  35B65, 35R11, 74J05, 74D05
\end{abstract}

\section{Introduction} 
In this work we analyse the generalized Cauchy problem for the fractional Eringen wave equation  given by: 
\begin{equation}\label{EringenFWE}
\partial_{t}^{2}u(x,t)-L^\alpha_x\partial_x^2 
u(x,t)=u_0(x)\otimes \delta'(t) + v_0(x)\otimes \delta(t),\;\;(x,t)\in \mathbb{R}^2,
\end{equation}
with $\supp(u)$ being contained in forward time $t>0$. The operator $L^\alpha_x$ is of convolution type with respect to the space variable and acts on a temperate
distribution $w$, such that $\xi \mapsto \frac{\widehat{w}}{\sqrt{1 + a_\alpha |\xi|^\alpha}}$ is temperate, in the following way
\begin{equation}
 L^\alpha_x w(x,t)=\mathcal{F}_{\xi \rightarrow x}^{-1} 
(\frac{1}{\sqrt{1+a_\alpha\left\vert \xi \right\vert^\alpha}})\ast_x w(x,t), \quad \alpha \in \,]1,3[, 
\label{L-alpha}
\end{equation}
with constant $a_\alpha$ given by
\begin{equation}\label{aalpha}
 a_\alpha=-\cos(\frac{\alpha\pi}{2}).
\end{equation}

The fractional Eringen wave equation is obtained from the system of three equations which give relations between  displacement,  stress  and  strain, respectively denoted by $u$, $\sigma $ and $\varepsilon $:
equation of motion of a (one-dimensional) deformable body (\ref{em0}), fractional Eringen
constitutive equation (\ref{FracECE}) and strain (\ref{sm0}). The system of equations reads
\begin{eqnarray}
{\partial_x}\sigma (x,t) &=& \rho\,\partial_t^{2}u(x,t),  \label{em0} \\
\sigma (x,t)-\ell^{\alpha} \, \mathrm{D}^{\alpha} \sigma (x,t)&=& E\,\varepsilon(x,t),
\label{FracECE} \\
\varepsilon (x,t) &=&{\partial_x}u\left( x,t\right),\label{sm0}
\end{eqnarray}
where $\ell$ denotes the parameter of non-locality, $E$ denotes the modulus of 
elasticity and $\rho$ denotes the density of material. 
Further,  $\mathrm{D}^{\alpha }$ denotes the fractional
differential operator defined as:
\begin{equation}\label{FD}
 \mathcal{F}(\mathrm{D}^{\alpha}w)=|\xi|^{\alpha}\cos(\frac{\alpha\pi}{2}) \hat{w},\quad \alpha \in \left(1,3\right). 
 \end{equation}
Note that the fractional derivative  (\ref{FD}) can be expressed through the two types of symmterized fractional derivative
\begin{eqnarray}
\mathrm{D}^{\alpha}w &=& \frac{1}{2\Gamma \left( 2-\alpha \right) }\frac{1}{{{\left\vert
x\right\vert }^{\alpha-1 }}} \ast_{x}\partial_x^2 w, \quad \alpha\in [1,2),\\
\mathrm{D}^{\alpha}w &=& \frac{1}{2\Gamma \left( 3-\alpha \right) }\frac{\sgn x}{{{\left\vert
x\right\vert }^{\alpha-2 }}} \ast_{x}\partial_x^3 w, \quad \alpha\in [2,3),
\end{eqnarray}
taken so that in both cases if $\alpha\rightarrow2$, then $\mathrm{D}^{\alpha}\rightarrow\partial_x^2 $, as expained in \cite{CZAS}.

The constitutive equation (\ref{FracECE}),  introduced in \cite{CZAS}, is the generalization of the stress gradient Eringen constitutive equation
\begin{equation}\label{ECE}
\sigma (x,t)-\ell^2 \partial_x^2 \sigma (x,t)= E\,\varepsilon(x,t),
\end{equation} 
proposed in \cite{eringen1}. For $\ell=0$ in (\ref{FracECE}) and \eqref{ECE} one obtains Hooke's law.

A wave equation that uses stress and strain gradient variant of the Eringen constitutive equation (\ref{ECE}) is considered in \cite{Challamel} for harmonic wave propagation in order to obtain a dispersion equation, used for finding the optimal values of the model parameters by comparison with the Born-K\'{a}rm\'{a}n model of lattice dynamics. By the same method, the optimal value of the order of the fractional differentiation $\alpha$, appearing in \eqref{FracECE}, is obtained in \cite{CZAS} as well. 

A microlocal analysis for the non-local wave equation, with the non-locality expressed through the Riesz fractional derivative, is performed in \cite{HOZ16}. Existence and stability of traveling wave solution to a non-local and non-linear version of the wave equation is studied in \cite{ErbayErbayErkip}, where the non-locality and non-linearity is expressed through pseudodifferential operators. A non-locality of Eringen's type is used in \cite{TongYuHuShiXu} in order to study wave propagation in fluid saturated porous media. Properties like $L^2$-stability and monotone wavefronts of traveling waves and pulses propagation in the case of non-local reaction-diffusion equation are widely studied, see for example \cite{LangStannat,TrofimchukPintoTrofimchuk,Volpert}.

An extensive review of application of non-local stress and strain gradient constitutive equations in vibrations and wave propagation in nano-scale beams is given in \cite{EltaherKhaterEmam}. In the wave propagation problems, including axial and torsional traveling and reflecting waves, the solution is assumed in harmonic form and the dispersion equation is analyzed. 

\section{Solvability and Sobolev regularity properties}\label{Solv}
In this section we establish existence and uniqueness of solutions to the Eringen fractional wave equation (\ref{EringenFWE}), thereby obtaining basic regularity properties in terms of Sobolev scales.

We start by introducing dimensionless quantities in the system (\ref{em0}-\ref{sm0}),
\begin{equation*}
\bar{x}=\frac{x}{\ell},\quad
\bar{t}=\frac{t}{\ell}\sqrt{\frac{E}{\rho}},\quad
\bar{u}=\frac{u}{\ell},\quad
\bar{\sigma}=\frac{\sigma }{E},
\end{equation*}
but for notational convenience drop the bar immediately again, and obtain 
\begin{eqnarray}
{\partial_x}\sigma (x,t) &=& \partial_t^{2}u(x,t),  \label{em} \\
\sigma (x,t)- \mathrm{D}^{\alpha} \sigma (x,t)&=& \varepsilon(x,t),
\label{FracECE1} \\
\varepsilon (x,t) &=&{\partial_x}u\left( x,t\right).\label{sm}
\end{eqnarray}
Applying a Fourier transform with respect to the spatial variable to system (\ref{em} - \ref{sm}) -- assuming that the solutions  are temperate distributions -- and by appropriate substitutions we arrive at 
\begin{equation}
\partial_t^2 \hat{u} + 
   \frac{\xi^2}{1 + a_\alpha |\xi|^\alpha}\, \hat{u}=0. \label{FTequ}
\end{equation}
The fractional Eringen wave equation 
\begin{equation}\label{EFWE}
\partial_{t}^{2}u(x,t)-L^\alpha_x\partial_x^2 
u(x,t)=0,
\end{equation}
where $L^\alpha_x$ is as in (\ref{L-alpha}), is defined via inverse (spatial) Fourier transform in \eqref{FTequ}. 
Suppose we had a classical solution $u_{\text{cl}}$ of (\ref{EFWE}), which is $C^2$ for $t>0$ and of class $C^{1}$ for $t\geq 0$, with initial data 
$$
 u_{\text{cl}} \!\mid_{t=0}\, = u_{0} \in C^{1}(\R), \quad 
 {\partial_t}u_{\text{cl}} \!\mid_{t=0}\, = v_{0}\in C(\R),
$$
and we put $u_{\text{cl}}(x,t) = 0$ for $t<0$.  Then we may define the distribution
\begin{equation*}
u(x,t)=u_{\text{cl}}(x,t) H(t) ,\;\;x,t\in\R,
\end{equation*}
where $H$ denotes the Heaviside function, so that $u$ has support in $t \geq 0$ and satisfies the fractional Eringen wave equation in the form (\ref{EringenFWE}). If $u_0$ and $v_0$ are also temperate this amounts to replacing \eqref{FTequ} by 
\begin{equation}
 \partial_t^2 \hat{u} + 
   \frac{\xi^2}{1 + a_\alpha |\xi|^\alpha}\, \hat{u} 
  = \hat{v_0} \otimes \delta +  \hat{u_0} \otimes \delta'.
\end{equation}
Considered as an ordinary differential equation in $t$ with parameter $\xi$, the latter is solved by
\begin{equation}\label{FTofSol}
\hat{u}(t) = \left( \hat{u_0}(\xi) \cos \left(\frac{\left\vert \xi \right\vert 
t}{\sqrt{1+a_\alpha\left\vert \xi \right\vert^\alpha}} \right)
    + \hat{v_0}(\xi) \, \frac{\sin \left(\frac{\left\vert \xi 
\right\vert t}{\sqrt{1+a_\alpha\left\vert \xi \right\vert^\alpha}} 
\right)}{\frac{\left\vert\xi \right\vert}{\sqrt{1+a_\alpha\left\vert \xi 
\right\vert^\alpha}} } \right) H(t).
\end{equation}
We recall the standard Sobolev spaces $H^s(\R) = \mathcal{F}^{-1} L_s^2(\R)$, where $s \in \R$ and $L_s^2(\R)$ is the set of $L^2$-functions $w$ such that $\xi \mapsto (1 + \xi^2)^{s/2} w(\xi)$ belongs to $L^2$ as well. Let us consider the operator $P$ acting on elements $u\in L^1_{\text{loc}}(\R, H^s(\R)) \cap \S'(\R^2)$ by
\begin{equation}\label{operatorP}
    Pu :=  \partial_t^2 u - L_x^\alpha \partial_x^2 u = 
		\partial_t^2 u - \mathcal{F}_{\xi \rightarrow x}^{-1} 
      (\frac{1}{\sqrt{1 + a_\alpha\left\vert \xi \right\vert^\alpha}})
			\ast_x \partial_x^2 u. 
\end{equation}

\begin{theorem}\label{solvability}
 Let $s \in \R$, $u_0\in H^s(\R)$, and $v_0\in H^{s+1-\alpha/2}(\R)$. Then
\begin{equation}\label{Pu}
Pu = u_0\otimes \delta'  + v_0\otimes \delta  
\end{equation}
has a unique solution $u\in L^1_{\text{\rm loc}}(\R, H^s(\R)) \cap \S'(\R^2)$ with $\supp u \subseteq \{(x,t) \in \R^2 \mid t \geq 0\}$ and 
$u\in C^\infty((0,\infty); \S'(\R))\cap C((0,\infty);H^s(\R))$, 
given by 
\begin{equation}\label{u-fund}
u(t) = u_{0}\ast_{x} E_0(t) 
    + v_0 \ast_x E_1(t),
\end{equation}
where
\begin{eqnarray}
  E_0(t) &:=& \mathcal{F}_{\xi \rightarrow x}^{-1}
	 \left[ \cos \left(\frac{\left\vert \xi \right\vert t}{\sqrt{1+a_\alpha\left\vert \xi \right\vert^\alpha}} \right) H(t) \right], \label{E0}\\
  E_1(t) &:=&  \mathcal{F}_{\xi \rightarrow x}^{-1}
       \left[ \frac{\sin \left(\frac{\left\vert \xi \right\vert t}{\sqrt{1+a_\alpha\left\vert \xi \right\vert^\alpha}} \right)}{\left(\frac{\left\vert 
\xi \right\vert}{\sqrt{1+a_\alpha\left\vert \xi \right\vert^\alpha}} \right)} H(t) 
\right]. \label{E1}
\end{eqnarray}

\end{theorem}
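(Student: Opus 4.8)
The plan is to remove the spatial variable by the partial Fourier transform $\mathcal{F}_{x\rightarrow\xi}$, which turns $P$ into the ordinary differential operator $\partial_t^2+\omega(\xi)^2$ in $t$ with $\xi$ as a parameter, where $\omega(\xi):=|\xi|/\sqrt{1+a_\alpha|\xi|^\alpha}$. Since $a_\alpha=-\cos(\alpha\pi/2)>0$ for $\alpha\in(1,3)$, the symbol $1+a_\alpha|\xi|^\alpha$ is smooth, real and $\geq 1$, so $\omega$ is a continuous nonnegative symbol satisfying $\omega(\xi)\leq|\xi|$, and \eqref{Pu} transforms into $\partial_t^2\hat u+\omega^2\hat u=\hat u_0\otimes\delta'+\hat v_0\otimes\delta$. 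The problem then reduces to solving this distributional ODE in $t$, fibrewise in $\xi$, under the support constraint $\{t\geq0\}$, and afterwards controlling the $\xi$-dependence in the Sobolev scale.

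First I would establish existence together with formula \eqref{u-fund} by computing the causal Green's functions. A short calculation shows $\partial_t^2\bigl[\tfrac{\sin(\omega t)}{\omega}H(t)\bigr]+\omega^2\bigl[\tfrac{\sin(\omega t)}{\omega}H(t)\bigr]=\delta$, the jump terms at $t=0$ cancelling because $\sin(\omega\cdot 0)=0$; hence $\widehat{E_1}$ is the forward fundamental solution, and $\widehat{E_0}=\partial_t\widehat{E_1}=\cos(\omega t)H(t)$ solves the same equation with right-hand side $\delta'$. Consequently $\hat u=\hat u_0\,\widehat{E_0}+\hat v_0\,\widehat{E_1}$ solves the transformed problem, and applying $\mathcal{F}_{\xi\rightarrow x}^{-1}$ converts the products into the convolutions \eqref{u-fund}; the factors $H(t)$ in \eqref{E0}--\eqref{E1} give $\supp u\subseteq\{t\geq0\}$.

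For uniqueness I would argue on the transformed side. The difference $w$ of two admissible solutions solves $\partial_t^2 w+\omega^2 w=0$ in $t$ with $\supp w(\cdot,\xi)\subseteq[0,\infty)$ for a.e.\ $\xi$. As this is a constant-coefficient ODE in $t$, every distributional solution is a classical one, $A\cos(\omega t)+B\sin(\omega t)$, and none of these is supported in a half-line unless $A=B=0$; carried out for a.e.\ $\xi$ this forces $w=0$. The assumption $u\in\S'(\R^2)\cap L^1_{\mathrm{loc}}(\R,H^s)$ is exactly what legitimises the partial transform and this fibrewise reasoning.

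The heart of the matter is the Sobolev bookkeeping, which I expect to be the main obstacle. For the $u_0$-term the multiplier $\cos(\omega t)$ is bounded by $1$ uniformly in $\xi$, so $u_0\ast_x E_0(t)\in H^s$ with $\|u_0\ast_x E_0(t)\|_{H^s}\leq\|u_0\|_{H^s}$. For the $v_0$-term the multiplier is $\frac{\sin(\omega t)}{\omega}$, and the two ends of the spectrum must be treated separately: near $\xi=0$ the factor $1/\omega=\sqrt{1+a_\alpha|\xi|^\alpha}/|\xi|$ is singular, but $|\frac{\sin(\omega t)}{\omega}|\leq t$ shows that the low frequencies are harmless; for $|\xi|\to\infty$ one uses $|\frac{\sin(\omega t)}{\omega}|\leq 1/\omega\sim\sqrt{a_\alpha}\,|\xi|^{\alpha/2-1}$, and inserting this bound into $\int(1+\xi^2)^s|\frac{\sin(\omega t)}{\omega}|^2|\hat v_0|^2\,d\xi$ is precisely what converts the hypothesised Sobolev order of $v_0$ into finiteness of the $H^s$-norm of the $v_0$-term. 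The competition at infinity between the order of $1/\omega$ and the Sobolev weight is the delicate point and is what dictates the order required of $v_0$. Finally, for the temporal regularity I note that for $t>0$ one has $H(t)=1$ and the symbols $\cos(\omega t)$, $\frac{\sin(\omega t)}{\omega}$ are smooth in $t$, with $\partial_t^k$ producing factors bounded by $\omega^k\leq|\xi|^k$; these are tempered multipliers, so differentiation under $\mathcal{F}_{\xi\rightarrow x}^{-1}$ maps into $\S'$, giving $u\in C^\infty((0,\infty);\S')$, while the uniform multiplier bounds together with dominated convergence yield $u\in C((0,\infty);H^s)$.
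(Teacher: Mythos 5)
Your proposal is correct and follows essentially the same route as the paper: pass to the partial Fourier transform in $x$, identify $\widehat{E_1}$ as the forward fundamental solution of the resulting ODE in $t$ with $\widehat{E_0}=\partial_t\widehat{E_1}$, read off the Sobolev regularity from the multiplier bounds $|\widehat{E_0}|\leq 1$ and $\widehat{E_1}=O(|\xi|^{\alpha/2-1})$ at infinity, and obtain uniqueness from the fact that solutions of the homogeneous ODE supported in $t\geq 0$ must vanish. Your explicit separate treatment of the low-frequency regime via $|\sin(\omega t)/\omega|\leq t$ is a small point the paper glosses over, but the argument is the same in substance.
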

\begin{proof}  Observe that $E_1 \in \S'(\R^2)$ is a fundamental solution of $P$, i.e., $P E_1 = \delta \otimes \delta$. Furthermore, $E_0 = \partial_t E_1$, hence $P E_0 = \delta \otimes \delta'$. Therefore, $u$ given by (\ref{u-fund}) satisfies Equation (\ref{Pu}) in $\S'(\R^2)$. It remains to check the spatial Sobolev regularity.  Since $u_0\in H^s(\R)$ and $v_0\in H^{s+1-\alpha/2}(\R)$ we have  $\widehat{u_0}\in L^2_s(\R)$ and $\widehat{v_0}\in L^2_{s+1-\alpha/2}(\R)$. We observe that
$\widehat{E_0}(\xi,t) = \cos (\frac{\left\vert \xi \right\vert 
t}{\sqrt{1+a_\alpha\left\vert \xi \right\vert^\alpha}}) H(t)$ is bounded and 
$\widehat{E_1}(\xi,t) = \frac{\sqrt{1+a_\alpha\left\vert \xi \right\vert^\alpha}}{\left\vert\xi \right\vert} \sin (\frac{\left\vert \xi \right\vert t}{\sqrt{1+a_\alpha\left\vert \xi \right\vert^\alpha}} ) H(t)   = O(|\xi|^{\frac{\alpha}{2}-1})$, thus as functions of $\xi$ we clearly have 
$\widehat{u_0}\, \widehat{E_0}(t) \in L^2_s(\R)$,  $\widehat{u_1}\, \widehat{E_1}(t) \in L^2_s(\R)$, which implies that $u$ given as in \eqref{u-fund} satisfies $\widehat{u}(t) \in L^2_s(\R) =\mathcal{F}(H^s(\R))$ for every $t$ and $u(t) = 0$, if $t < 0$. With respect to $t$,  the corresponding functions clearly are $L^\infty$ in terms of the $L^2_s(\R)$-norms for $t \in \R$ (hence $L^1_{\text{loc}}$),  weakly smoothly depending on $t >0$ as temperate distributions with respect to $\xi$, and continuous for $t > 0$ with respect to the $L^2_s(\R)$-norms.

To show uniqueness in $L^1_{\text{\rm loc}}(\R, H^s(\R)) \cap\S'(\R^2)$ we have to consider the homogenous equation
$P u = 0$ under the condition $\supp u \subseteq \{(x,t) \in \R^2 \mid t \geq 0\}$.  
Upon Fourier transform with respect to the spatial variable we obtain the ordinary differential equation (with parameter $\xi \in \R$)
$$
 \partial_t^2 \widehat{u} + 
   \frac{\xi^2}{1 + a_\alpha |\xi|^\alpha}\, \widehat{u} 
  = 0,
$$
which has only smooth solutions with respect to time $t$. Since $\widehat{u} \mid_{\{t < 0\}} = 0$, we have, in particular, $\widehat{u}$  that 
satisfies the initial conditions $\widehat{u} \!\mid_{t = -1} = 0$, $\partial_t \widehat{u} \!\mid_{t = -1} = 0$. Therefore, $\widehat{u} = 0$ and thus  $u = 0$. 
\end{proof}

\begin{remark}\label{RemEReg}
Both $E_0$ and $E_1$ are weakly smooth with respect to $t$ when $t \neq 0$, which implied the property $u\in C^\infty((0,\infty); \S'(\R))$ for the solution given in the theorem above. Note that, in addition, we have that 
$t \mapsto E_1(t)$ is continuous $\R \to \S'(\R)$ with $E_1(0) = 0$, whereas $\lim_{t \to 
0+} E_0(t) = \delta \neq 0 = \lim_{t \to 0-} E_0(t)$. However, $E_0$ is weakly measurable with respect to $t \in \R$. 
\end{remark}


\section{Numerical examples}\label{NumEx}

The solution to Eringen's wave equation \eqref{EringenFWE} in the sense of Theorem \ref{solvability} is explicitly given by \eqref{u-fund}. We give a few numerical examples  illustrating the evolution of an initial displacement assumed in the form of a Dirac delta distribution, i.e., $u_0 = \delta \in H^{- \frac{1}{2} - \epsilon}(\R)$ for every $\epsilon > 0$, and with zero initial velocity, i.e., $v_0 = 0$. For the purpose of numerical implementation and 
because the fundamental solution  is  to be convolved with a distribution as well, the initial displacement is taken as the smooth approximation $u_{0, a}$ ($a > 0$), $\lim_{a \to 0} u_{0, a} = \delta$, of the Dirac distribution in the form 
\begin{equation}\label{unulaAprox}
   u_{0,a} (x) :=  \frac{1}{a \sqrt{\pi}} e^{-\frac{x^2}{a^2}}.
\end{equation}
The corresponding approximate solution $u_a$ then is
\begin{align}
u_a(t) = &\, u_{0,a}\ast_{x} E_0(t) =
   \F_{\xi \rightarrow x}^{-1}[\widehat{u_{0,a}}(\xi)\,\widehat{E_0}(\xi,t)] \notag \\
   = &  \, \mathcal{F}_{\xi \rightarrow x}^{-1}
	 \left[\cos \left(\frac{\left\vert \xi \right\vert t}{\sqrt{1+a_\alpha\left\vert \xi \right\vert^\alpha}} \right)  H(t) \, e^{-\frac{a^2 \xi^2 }{4}}  \right], \notag
\end{align}
where we used \eqref{E0} and that $\F_{x \to \xi}(\frac{1}{a \sqrt{\pi}} e^{-\frac{x^2}{a^2}}) = e^{-\frac{a^2 \xi ^2}{4}}$. Upon a straightforward calculation we obtain the following formula which was used in the numerical calculations:
\begin{equation}
 u_a(x,t) = \frac{1}{2 \pi}\int\limits_0^\infty \left[ 
\cos \left(\xi   \left(x+\frac{t}{\sqrt{1+ a_\alpha\xi ^{\alpha }}}\right)\right)+\cos \left(\xi \left(x-\frac{t}{\sqrt{1+ a_\alpha\xi ^{\alpha }}}\right)\right)\right]e^{-\frac{a^2 \xi ^2}{4}}d\xi. \label{u-vNulaJeNula}
\end{equation}

Figures  \ref{fig.1},  \ref{fig.3}, and  \ref{fig.2} show plots of the spatial profiles of the solution $u_a(t)$ at different instances of time as given by \eqref{u-vNulaJeNula} for the regularization parameter value $a=0.1$ and for values of $\alpha$ being $1.5$, $2.5$, and $2$, respectively.
 
\pagebreak

In Figure \ref{fig.1} we observe an interesting oscillatory character of the solution for  $\alpha = 1.5$. A region of strong oscillation seems clearly distinguishable, outside of which the solution vanishes  in fact (though not shown in all figures).  As time progresses, this oscillation region moves along the $x$-axis and also stretches. Moreover, the largest amplitudes  in the oscillations decrease smoothly with time. Overall, such behavior might be interpreted as a sort of propagation of initial disturbances. 
\begin{figure}[htbp]
 \begin{center}
 \begin{minipage}{73mm}
 \includegraphics[scale=0.85]{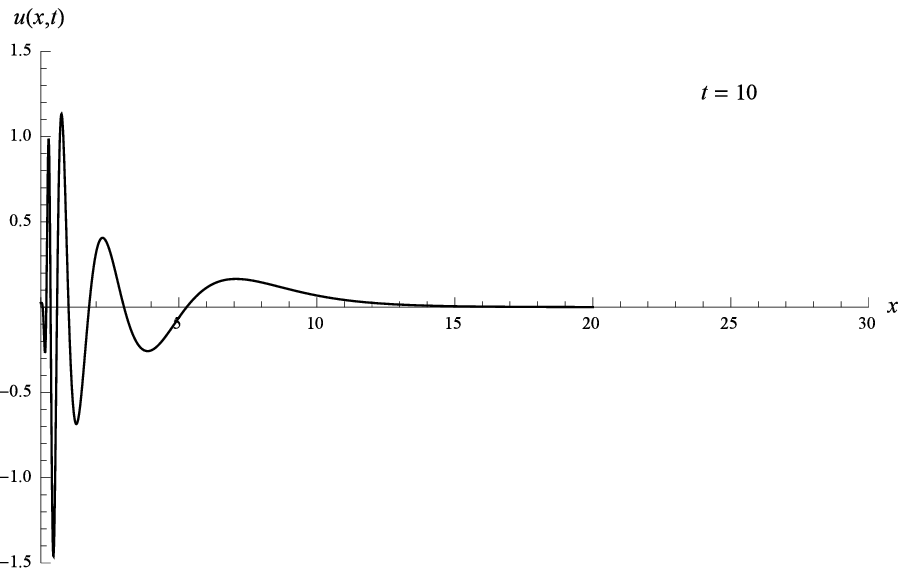}
 \end{minipage}
 \hfil
 \begin{minipage}{73mm}
 \includegraphics[scale=0.85]{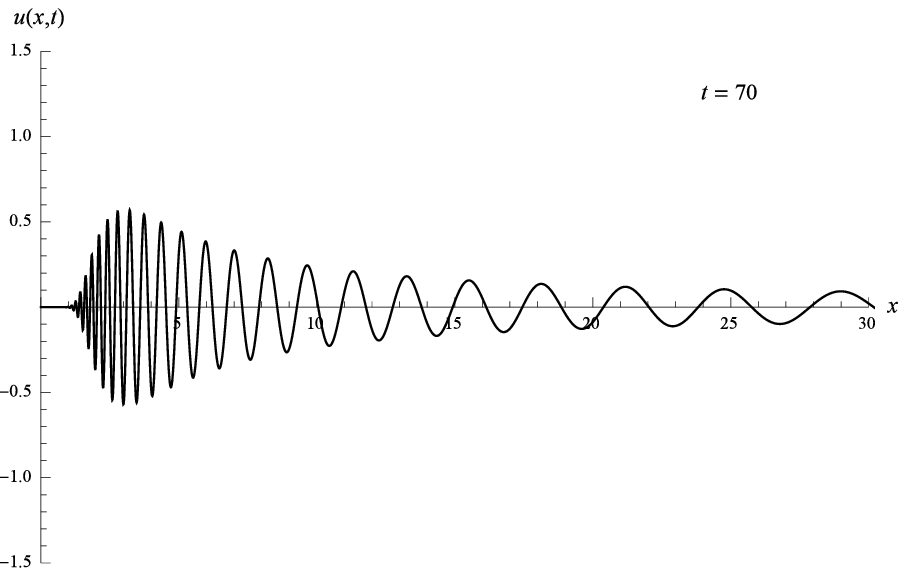}
 \end{minipage}
 \vfil
 \begin{minipage}{73mm}
 \includegraphics[scale=0.85]{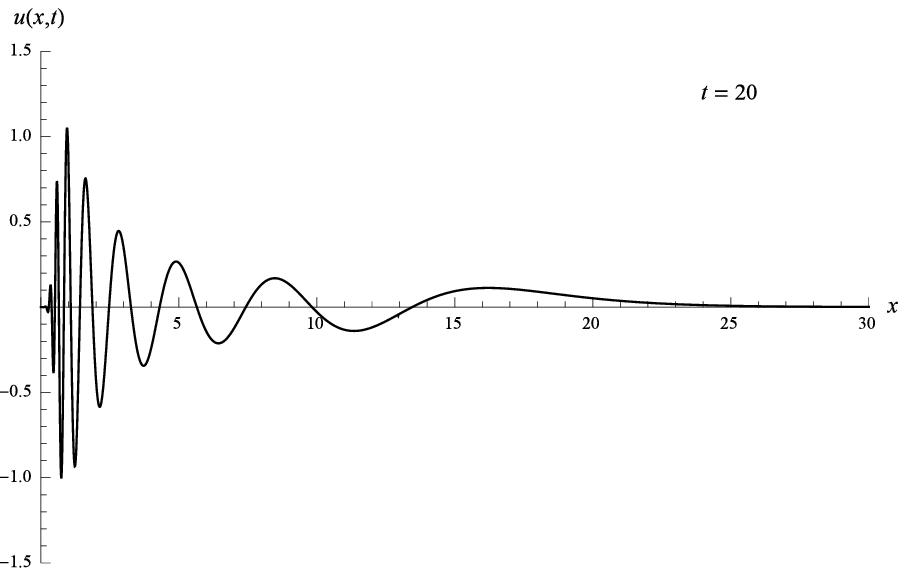}
 \end{minipage}
 \hfil
 \begin{minipage}{73mm}
 \includegraphics[scale=0.85]{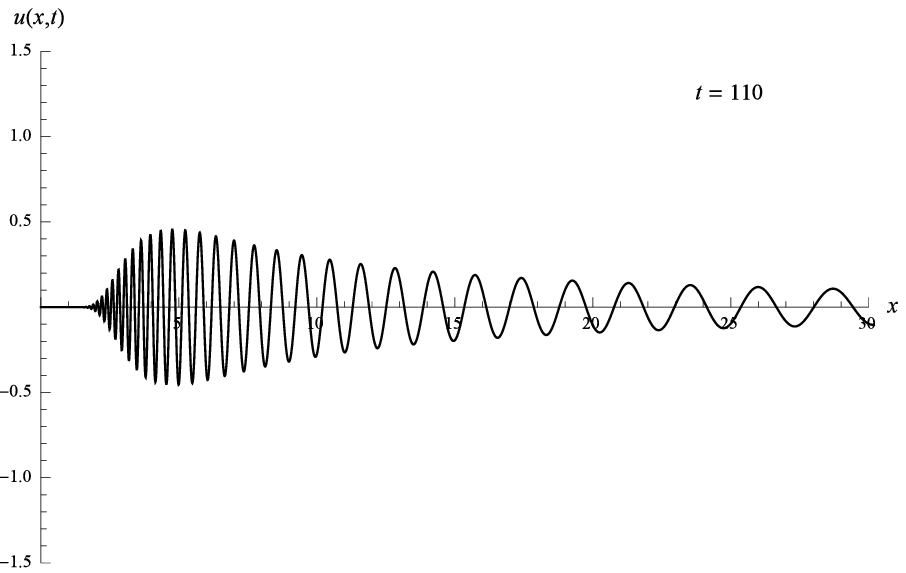}
 \end{minipage}
 \vfil
 \begin{minipage}{73mm}
 \includegraphics[scale=0.85]{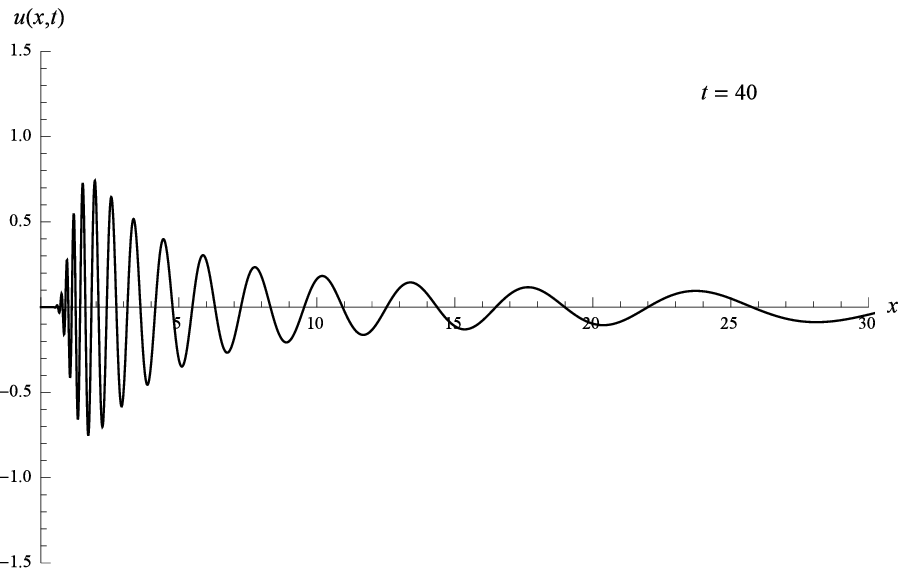}
 \end{minipage}
 \hfil
 \begin{minipage}{73mm}
 \includegraphics[scale=0.85]{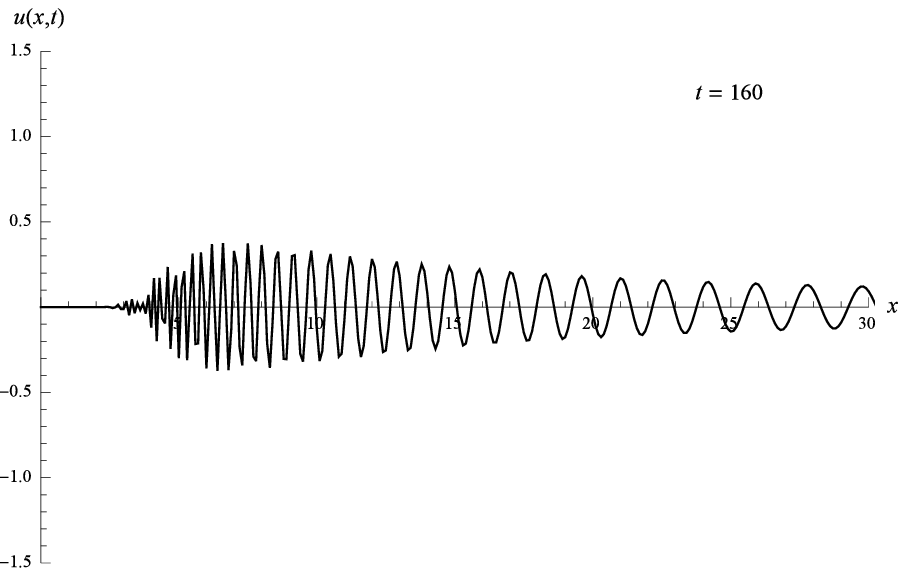}
 \end{minipage}
 \end{center}
 \caption{Spatial profiles of the solution with $\alpha=1.5$ for different instances of time.}
 \label{fig.1}
 \end{figure}
 
\pagebreak 

For $\alpha = 2.5$ the character of the solution shown in Figure \ref{fig.3} is similar to that in the previous case, but the oscillatory character of the solution is less prominent. The region with distinguished oscillations is more narrow, but still stretches as time progresses, and the decrease of the peaks seems less smooth. Moreover, the left boundary of the oscillation region does not seem to move within the considered time interval.  
 
\begin{figure}[htbp]
 \begin{center}
 \begin{minipage}{73mm}
 \includegraphics[scale=0.85]{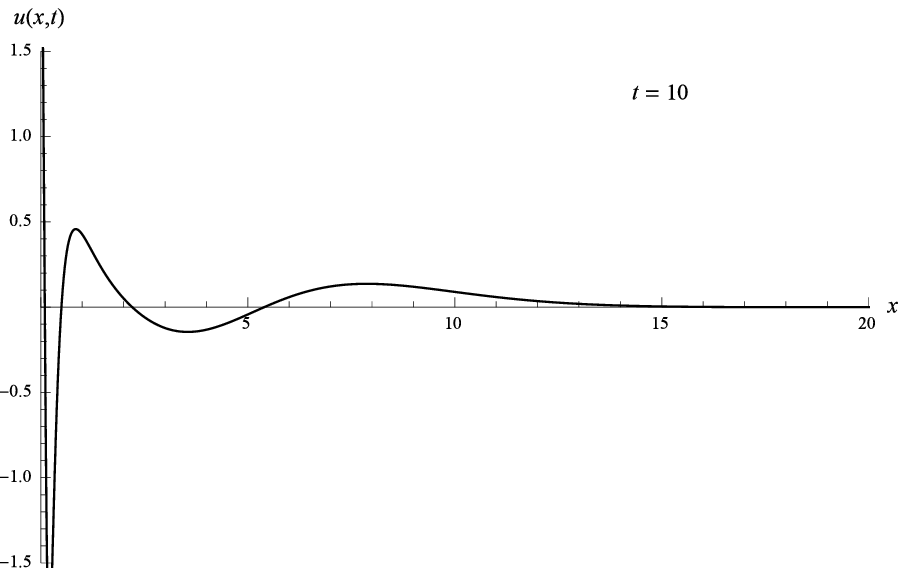}
 \end{minipage}
 \hfil
 \begin{minipage}{73mm}
 \includegraphics[scale=0.85]{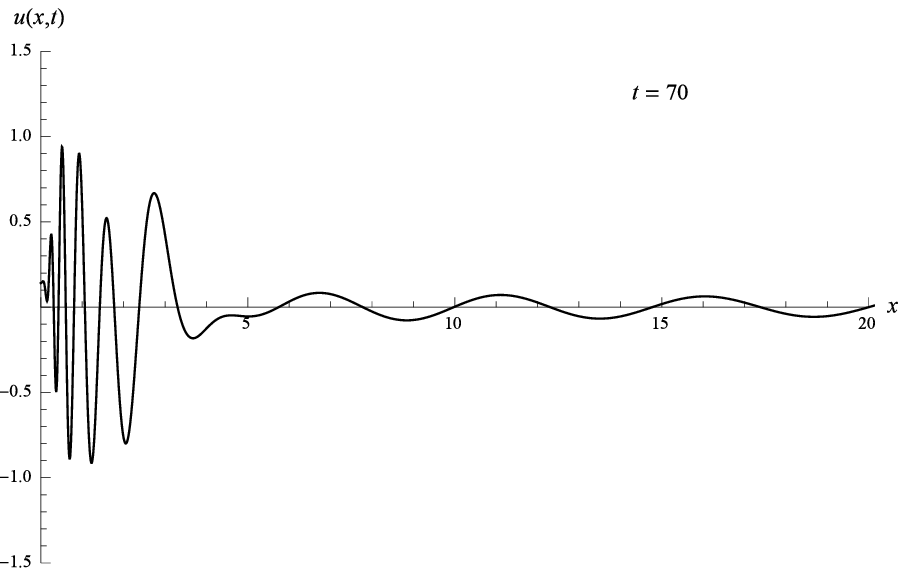}
 \end{minipage}
 \vfil
 \begin{minipage}{73mm}
 \includegraphics[scale=0.85]{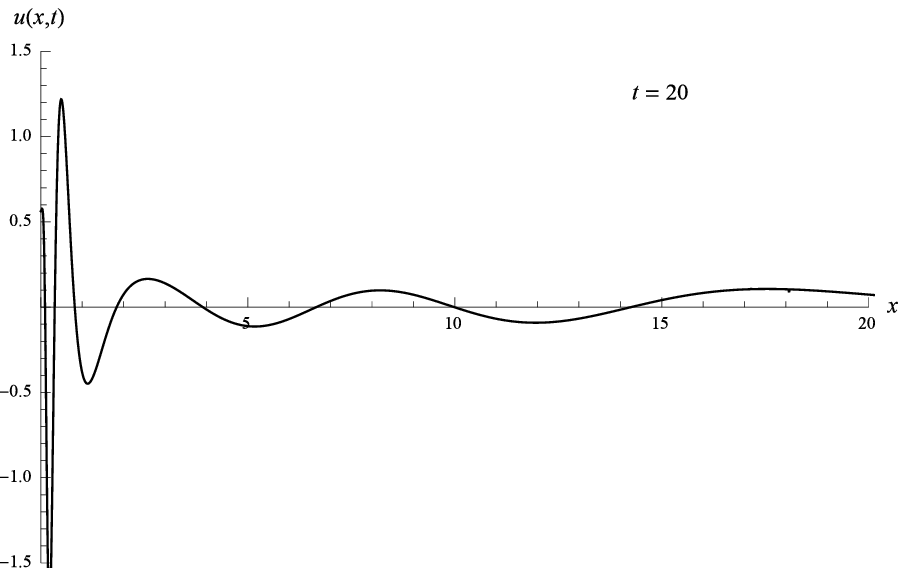}
 \end{minipage}
 \hfil
 \begin{minipage}{73mm}
 \includegraphics[scale=0.85]{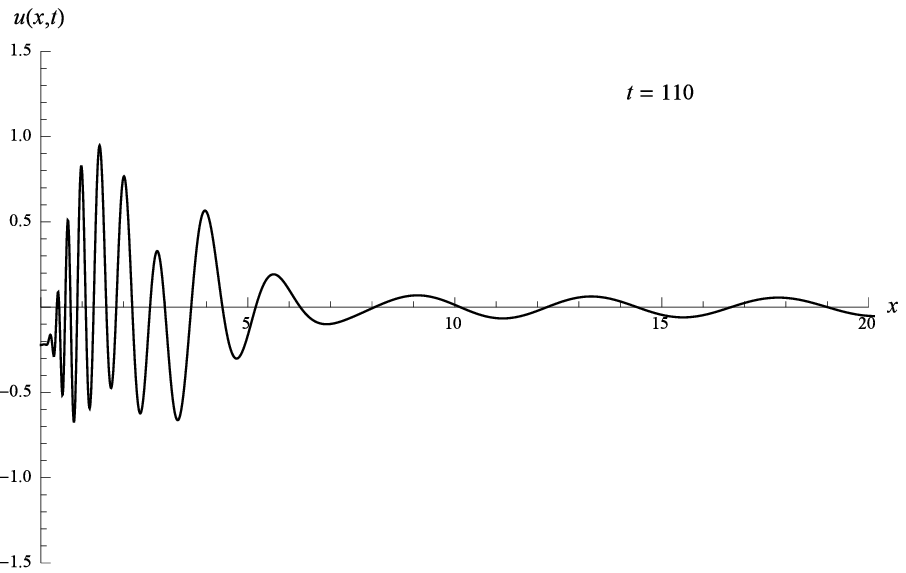}
 \end{minipage}
 \vfil
 \begin{minipage}{73mm}
 \includegraphics[scale=0.85]{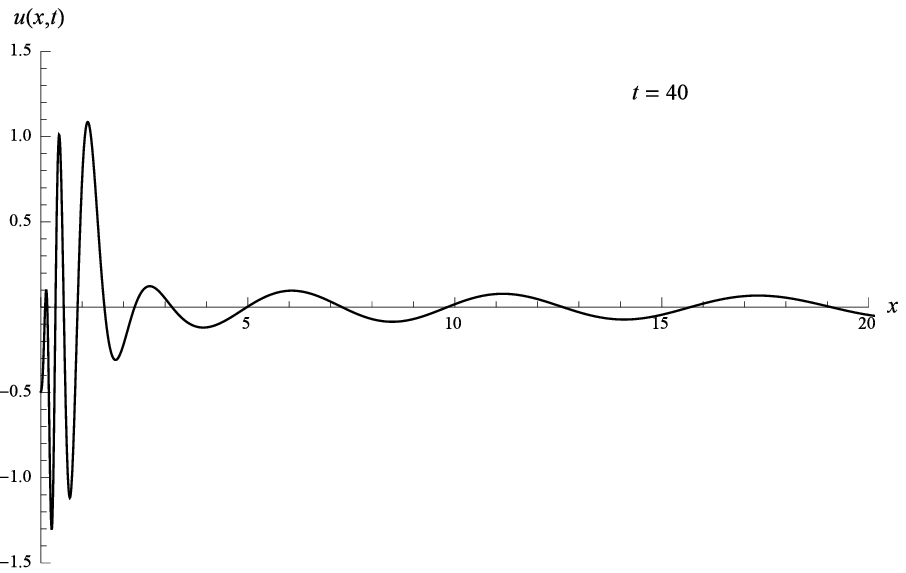}
 \end{minipage}
 \hfil
 \begin{minipage}{73mm}
 \includegraphics[scale=0.85]{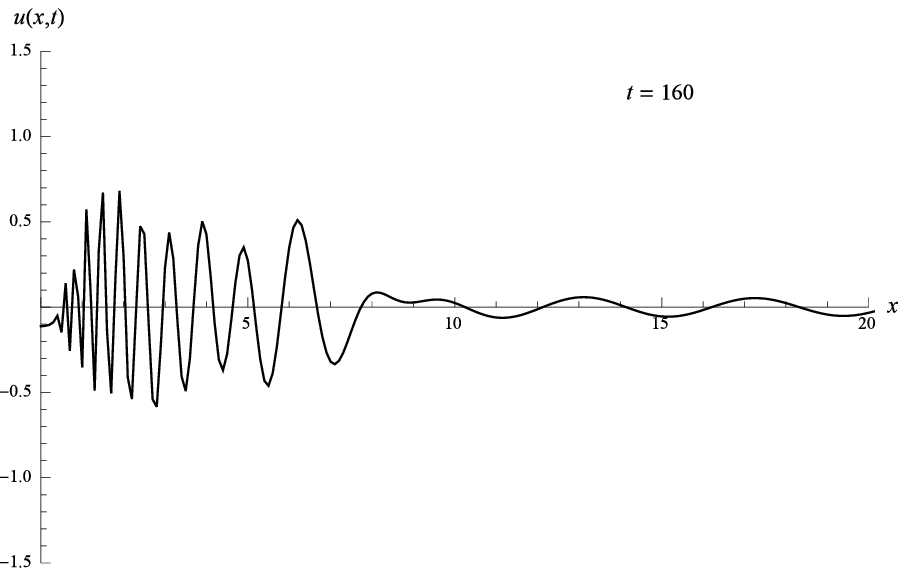}
 \end{minipage}
 \end{center}
 \caption{Spatial profiles of the solution with $\alpha=2.5$ for different instances of time.} 
 \label{fig.3}
 \end{figure}
 \pagebreak
 
 Figure \ref{fig.2} represents the solution to the classical Eringen wave equation, i.e., $\alpha = 2$, and shows an oscillatory character of the solution as well, but there is no particular region of oscillation concentration moving in time. In course of time, the solution stretches and the oscillations are more frequent as in previous cases.  
 
 \begin{figure}[htbp]
 \begin{center}
 \begin{minipage}{73mm}
 \includegraphics[scale=0.85]{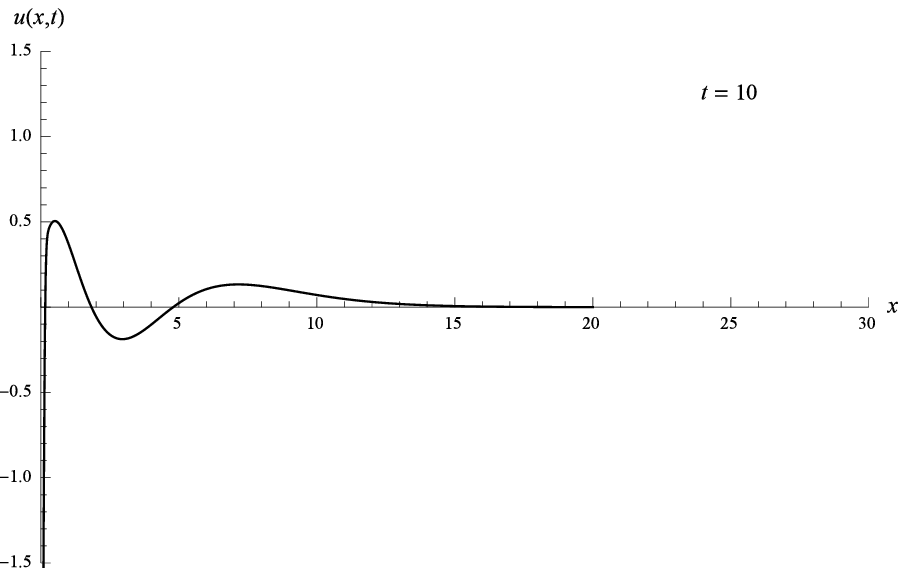}
 \end{minipage}
 \hfil
 \begin{minipage}{73mm}
 \includegraphics[scale=0.85]{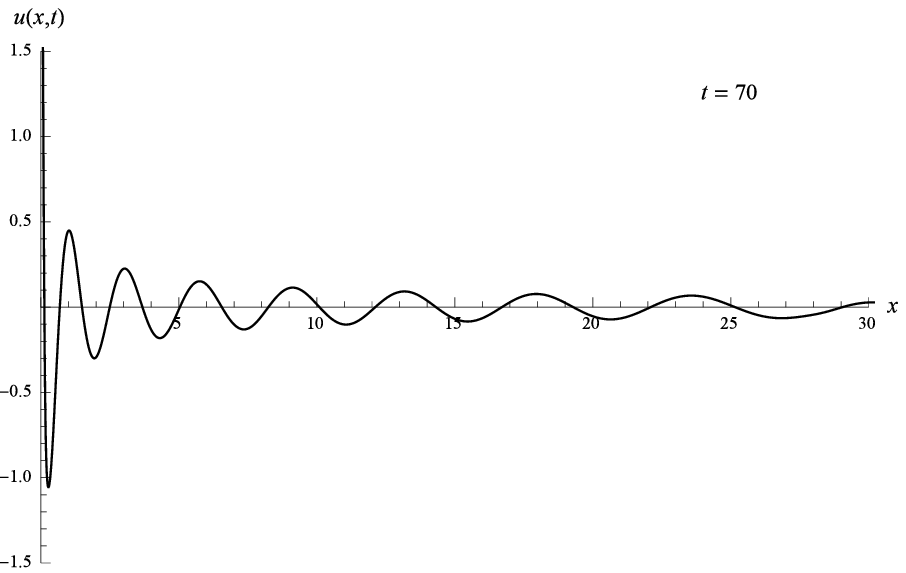}
 \end{minipage}
 \vfil
 \begin{minipage}{73mm}
 \includegraphics[scale=0.85]{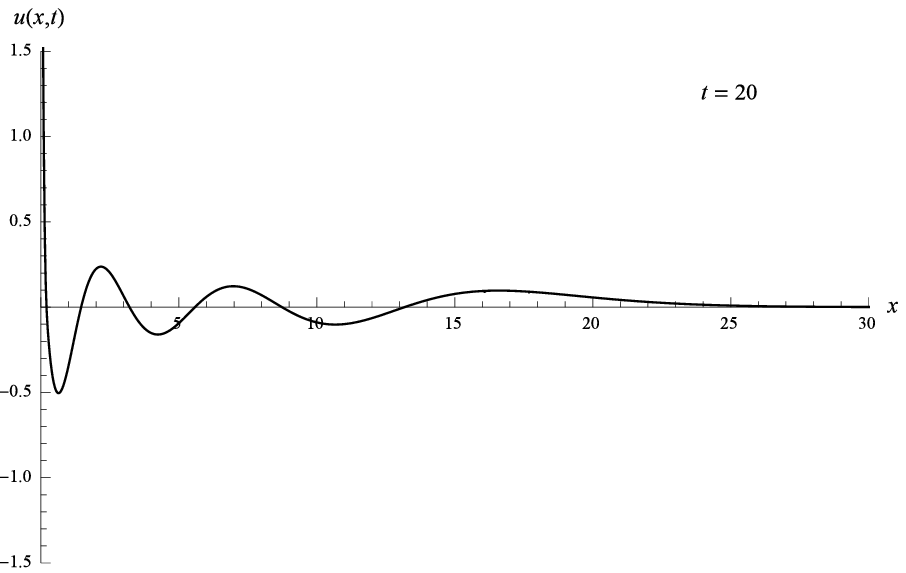}
 \end{minipage}
 \hfil
 \begin{minipage}{73mm}
 \includegraphics[scale=0.85]{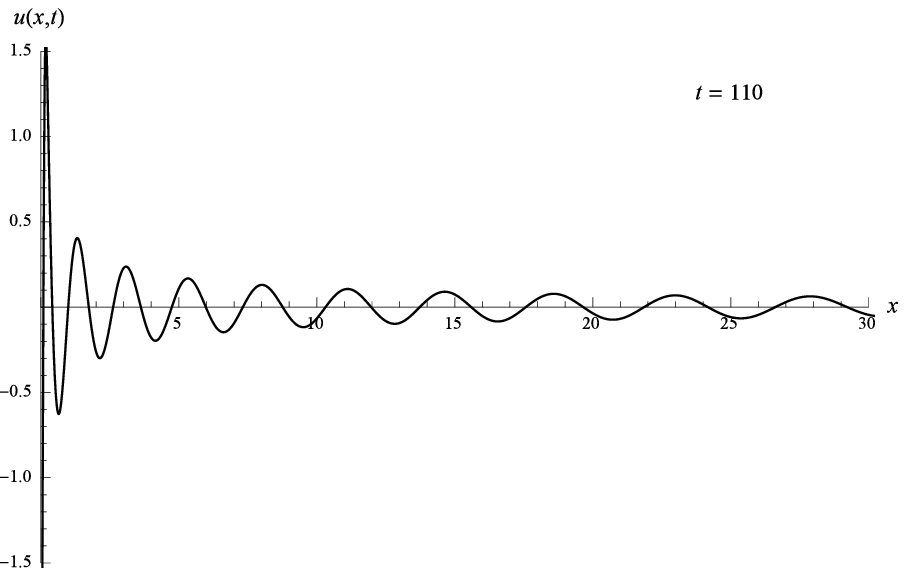}
 \end{minipage}
 \vfil
 \begin{minipage}{73mm}
 \includegraphics[scale=0.85]{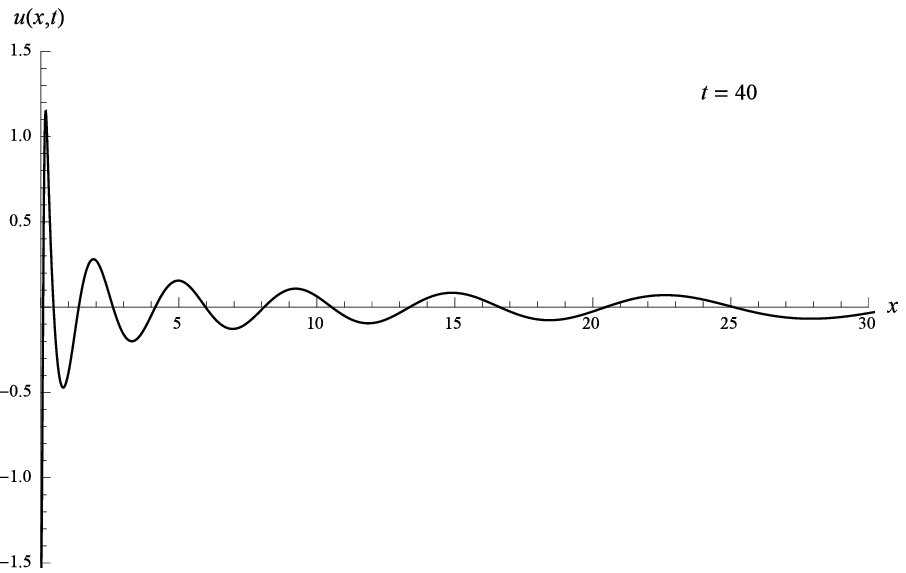}
 \end{minipage}
 \hfil
 \begin{minipage}{73mm}
 \includegraphics[scale=0.85]{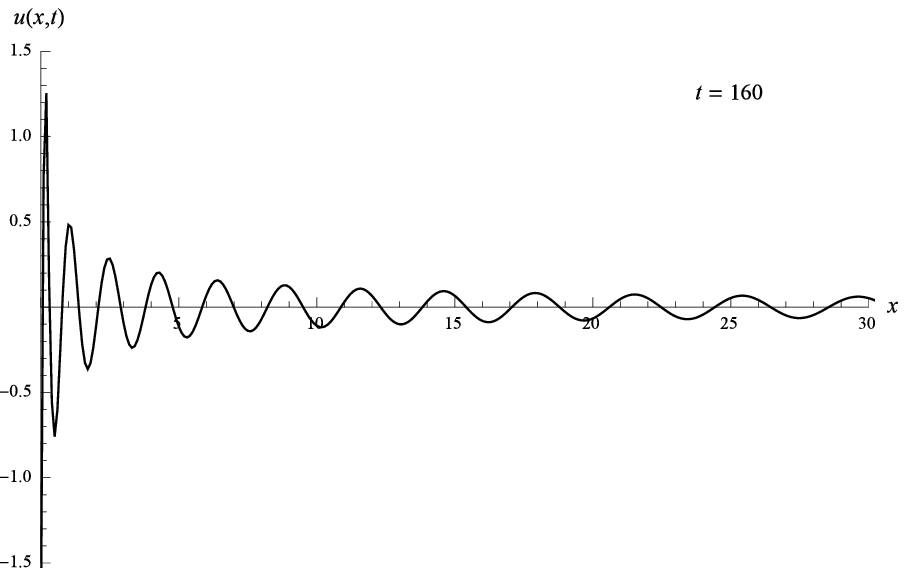}
 \end{minipage}
 \end{center}
 \caption{Spatial profiles of the solution with $\alpha=2$ for different instances of time.}
 \label{fig.2}
 \end{figure}

A first question might be whether the nature of the oscillations in the plots of the numerical approximations stems from singularities in the distributional solution with $u_0 = \delta$. An attempt to address a few aspects of this question microlocally in the case $\alpha \geq 2$ is the content of the following section. A refined analysis of any relatively smooth but oscillatory behavior of the solution and possible propagation of such would seems to require a very different analytical approach and is not subject of the present paper.


\section{The wave front set of the solution in case $\mathbf{\alpha \geq 2}$}\label{MLp}
The symbol of the operator $P$ (in the sense of a Fourier multiplier) is
\begin{equation}
   p(\xi,\tau) = - \tau^2 + \frac{\xi^2}{1 + a_\alpha |\xi|^\alpha} = 
	- \tau^2 + h_\alpha^2(\xi), 
\end{equation}
where
\begin{equation}
 h_\alpha(\xi)= \frac{\left\vert \xi 
\right\vert }{\sqrt{1+a_\alpha\left\vert \xi \right\vert^\alpha}}
\end{equation}
 is a symbol of order $\displaystyle{1 - \frac{\alpha}{2}}$ and of H\"{o}rmander type $(\rho, \delta) = (1,0)$ for large $|\xi|$ (cf.\ \cite[Definition 7.8.1]{hermander}). We note in passing that inclusion relations  as in the lemma and the proposition below can also be shown in case of an arbitrary symbol  in the same class as $h_\alpha$, thus for a slightly wider class of operators than just the Eringen-type model---but the proofs will still require $\alpha \geq 2$.

\begin{lemma}\label{fund-res}  Let $\alpha \geq 2$ and  $E_j^+$ ($j=0,1$) denote the 
restriction of $E_j$  to the open half-space $\{(x,t)\in \R^2 \mid t > 0\}$. Then the wave front sets satisfy
$$
   \WF(E_j^+) \subseteq 
      \{ (0,t;\xi,0) \mid t > 0, \xi \neq 0\}=:W_0.
$$
\end{lemma}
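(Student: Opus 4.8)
The plan is to bound $\WF(E_j^+)$ from above by non-stationary phase, after discarding an everywhere-smooth low-frequency piece. Fix a cutoff $\chi\in C_c^\infty(\R)$ with $\chi\equiv1$ near $\xi=0$ and write $E_j^+=E_j^{\mathrm{low}}+E_j^{\mathrm{high}}$ according to $1=\chi(\xi)+(1-\chi(\xi))$ inside \eqref{E0}--\eqref{E1}. The low-frequency part is the inverse Fourier transform in $x$ of a function that is compactly supported in $\xi$ and smooth in $t$ for $t>0$ (each $t$-derivative only produces bounded factors $h_\alpha(\xi)^k$ on $\supp\chi$), hence $E_j^{\mathrm{low}}\in C^\infty$ and contributes nothing to $\WF$. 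For the high-frequency part I use $\cos\theta=\tfrac12(e^{i\theta}+e^{-i\theta})$ and $\sin\theta=\tfrac1{2i}(e^{i\theta}-e^{-i\theta})$ to write, for $t>0$,
\[
 E_j^{\mathrm{high}}(x,t)=\frac{1}{4\pi}\sum_{\pm}\int_\R b_j^{\pm}(\xi)\,e^{i\Phi_\pm(x,t,\xi)}\,d\xi,\qquad \Phi_\pm(x,t,\xi)=x\xi\pm t\,h_\alpha(\xi),
\]
with amplitudes $b_0^\pm=1-\chi$ (a symbol of order $0$) and $b_1^\pm=\pm(1-\chi)/(i\,h_\alpha)$ (a symbol of order $\tfrac\alpha2-1$). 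Since $E_0=\partial_t E_1$ gives $\WF(E_0^+)\subseteq\WF(E_1^+)$, it suffices to treat both amplitudes on the same footing; the only relevant feature is that they are symbols in $\xi$.

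I then localize the base point and the frequency direction in two complementary steps. First I show $\singsupp(E_j^+)\subseteq\{(0,t):t>0\}$. Fix $(x_0,t_0)$ with $x_0\neq0$, $t_0>0$, and a small neighborhood $U$. Because $h_\alpha$ has order $1-\tfrac\alpha2$ and type $(\rho,\delta)=(1,0)$, its derivative $h_\alpha'$ has order $-\tfrac\alpha2\le-1$ for $\alpha\ge2$, so $h_\alpha'(\xi)\to0$ as $|\xi|\to\infty$. Hence there is $R$ with $|\partial_\xi\Phi_\pm|=|x\pm t\,h_\alpha'(\xi)|\ge|x_0|/2$ for $(x,t)\in U$ and $|\xi|\ge R$. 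Repeated integration by parts in $\xi$ with the operator $L=(\partial_\xi\Phi_\pm)^{-1}\partial_\xi/i$ then lowers the order of the amplitude by one at each step (the reciprocal $1/\partial_\xi\Phi_\pm$ is an order-$0$ symbol bounded below, and $\partial_\xi^2\Phi_\pm=\pm t\,h_\alpha''(\xi)$ is of order $-\tfrac\alpha2-1$), so that after differentiating $E_j^{\mathrm{high}}$ any number of times in $x$ and $t$ the resulting $\xi$-integral converges absolutely and depends continuously on $(x,t)\in U$. Thus $E_j^{\mathrm{high}}\in C^\infty(U)$, and $E_j^+$ is smooth off $\{x=0\}$.

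Second, I show that over every base point the only admissible wave-front directions have $\sigma=0$. For $\psi\in C_c^\infty$ supported near $(x_0,t_0)$, carrying out the $x$- and $t$-integrations in $\widehat{\psi E_j^{\mathrm{high}}}$ yields
\[
 \widehat{\psi E_j^{\mathrm{high}}}(\eta,\sigma)=\frac{1}{4\pi}\sum_{\pm}\int_\R b_j^\pm(\xi)\,\widehat\psi\!\left(\eta-\xi,\;\sigma\mp h_\alpha(\xi)\right)d\xi,
\]
with $\widehat\psi$ Schwartz. Let $(\eta_0,\sigma_0)$ be a direction with $\sigma_0\neq0$ and $\Gamma$ a closed cone around it with $|\sigma|\ge c\,|(\eta,\sigma)|$ on $\Gamma$. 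Here the hypothesis $\alpha\ge2$ is used decisively: $h_\alpha$ has order $1-\tfrac\alpha2\le0$ and hence is bounded, say $|h_\alpha|\le M$, so the second argument satisfies $|\sigma\mp h_\alpha(\xi)|\ge c\,|(\eta,\sigma)|-M$ for all $\xi$. The rapid decay of $\widehat\psi$ in this argument, together with its decay in $\eta-\xi$ to make the $\xi$-integral converge and to absorb the at most polynomial growth of $b_j^\pm$, gives $|\widehat{\psi E_j^{\mathrm{high}}}(\eta,\sigma)|\le C_N(1+|(\eta,\sigma)|)^{-N}$ uniformly on $\Gamma$. Hence no direction with $\sigma\neq0$ lies in $\WF(E_j^+)$.

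Combining the two steps, $\WF(E_j^+)\cap\{t>0\}$ is contained in the intersection of the slabs $\{x=0\}$ and $\{\sigma=0\}$, and since wave-front covectors are nonzero this forces $\eta\neq0$; that is, $\WF(E_j^+)\subseteq W_0$. The main obstacle is that the phase $\Phi_\pm$ is not homogeneous in $\xi$, so the textbook oscillatory-integral estimate for wave front sets does not apply verbatim; the work lies in the uniform large-$\xi$ symbol bounds (in the Hörmander $(1,0)$ class) underlying both integration-by-parts arguments. These are precisely the places where $\alpha\ge2$ enters — through boundedness of $h_\alpha$ for the direction step and $h_\alpha'\to0$ for the base-point step — and where the only mild extra care for $E_1^+$ is needed, owing to the growth of the amplitude $1/h_\alpha$.
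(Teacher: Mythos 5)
Your argument is correct, and it is in substance the paper's Variant (A): the same low/high frequency splitting with a cutoff near $\xi=0$, the smooth low-frequency remainder, and a stationary-phase analysis of the high-frequency oscillatory integral, all powered by the same key fact that $\alpha\ge 2$ makes $h_\alpha$ a symbol of nonpositive order in the H\"ormander $(1,0)$ class. The difference is purely in execution. You keep the ``physical'' phase $x\xi\pm t\,h_\alpha(\xi)$ and therefore have to carry out the two localizations by hand --- non-stationary phase in $\xi$ for base points with $x\neq0$ (using $h_\alpha'\to0$), and the explicit estimate of $\widehat{\psi E_j^{\mathrm{high}}}(\eta,\sigma)$ on cones with $\sigma\neq0$ (using boundedness of $h_\alpha$) --- precisely because, as you note, the phase is not homogeneous in $\xi$ and the textbook theorem does not apply verbatim. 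The paper resolves that same obstacle differently: it absorbs the entire factor $e^{\pm i t h_\alpha(\xi)}$ (equivalently $\cos$, $\sin/h_\alpha$) into the amplitude, which for $\alpha\ge2$ is a legitimate symbol of order $0$ depending smoothly on the parameters $(x,t)$, and is then left with the homogeneous phase $\phi=x\xi$, to which H\"ormander's Theorem 8.1.9 applies directly and yields both localizations ($x=0$ and $\tau=0$) in one stroke. Your version is more self-contained and makes visible exactly where $\alpha\ge2$ enters (twice, through $h_\alpha\in L^\infty$ and $h_\alpha'\to0$); the paper's version is shorter because the citation does the bookkeeping. Your direction step is also close in spirit to the paper's Variant (B), which excludes the covectors $(0,\tau)$ by weak smoothness in $t$ and confines the rest via the characteristic set of an associated pseudodifferential operator. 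Two small points of care in your write-up: for $E_1^+$ with $\alpha>2$ the amplitude $b_1^\pm$ has positive order, so the interchange of the $\xi$-integral with the $(x,t)$-integration defining $\widehat{\psi E_1^{\mathrm{high}}}$ should be justified in the oscillatory-integral sense (a Gaussian regularization or a preliminary integration by parts in $x$ suffices); and in the direction step you should split the Schwartz decay of $\widehat\psi$ between its two arguments so that the $\xi$-integration both converges and absorbs the polynomial growth of $b_1^\pm$ --- you indicate this, and it works.
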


\begin{proof} We offer two variants of this proof. 

\paragraph{Variant (A):} This is an adaptation of the proof of Claim 1 from the proof of \cite[Lemma 2.2]{HOZ16}. For convenience of the reader, we provide the details.

Let $t > 0$, put $e_0^+(\xi,t) := \cos\left(\frac{\left\vert \xi \right\vert 
t}{\sqrt{1+a_\alpha\left\vert \xi \right\vert^\alpha}} \right)$ and 
$e_1^+(\xi,t) := \sin \left(\frac{\left\vert \xi \right\vert 
t}{\sqrt{1+a_\alpha\left\vert \xi \right\vert^\alpha}} \right)/
\left(\frac{\left\vert \xi \right\vert}{\sqrt{1+a_\alpha\left\vert \xi 
\right\vert^\alpha}} \right)$, and choose $\rho \in C^{\infty}(\R)$ such that $\rho(\xi) = 0$ for 
$|\xi| \leq 1/2$ and $\rho (\xi) =1$ for $|\xi| \geq 1$.  Then at fixed arbitrary $t > 0$ and $j \in 
\{0,1\}$ we have
$$
E_j^+(t) = \mathcal{F}_{\xi \to x}^{-1} \left(e_j^+(\xi,t)\right) = \mathcal{F}^{-1}_{\xi \to x} \left(e_j^+(\xi,t) \rho(\xi)\right) + \mathcal{F}_{\xi \to x}^{-1} \left(e_j^+(\xi,t) (1 - \rho(\xi))\right) =: F_{j,1}(t) + F_{j,2}(t).
$$
We observe that $(\xi,t) \mapsto e_j^+(\xi,t) (1 - \rho(\xi))$ is continuous, has compact $\xi$-support, and is smooth with respect to $t$, more precisely, $e_j^+ (1 - \rho) \in C^\infty(]0,\infty[, C_{\text{c}}(\R))$, hence by linearity, commutativity with $\frac{d}{dt}$, and continuity of the inverse Fourier transform with respect to $\xi$, we have $F_{j,2} \in C^\infty \left( ]0,\infty[, \mathcal{F}^{-1}\left(C_{\text{c}}(\R)\right) \right) \subseteq C^\infty \left( ]0,\infty[, C^\infty(\R) \right) \subseteq C^\infty(]0,\infty[ \, \times \R)$, thus, $\WF(E_j^+) = \WF(F_{j,1})$.

Note that $a_j(x,t,\xi) := e_j^+(\xi,t) \rho(\xi)$ define functions in $C^\infty(\R^2\times \R)$ 
($j=0,1$) and that $a_1(x,t,\xi) = \int_0^t a_0(x,s,\xi) \, ds$. Furthermore, $a_0$ and $a_1$ are symbols of class $S^0(\R^2\times \R)$  by \cite[Proposition 1.1.8]{hermander-paper}. 

We observe that $F_{j,1}$ ($j=0,1$), being an inverse Fourier transform, can be written as oscillatory integral (in the sense of \cite[Theorem 7.8.2]{hermander}) with symbol $a_j(x,t,\xi)/(2\pi)$ and phase function $\phi(x,t,\xi) = x \, \xi$ in both cases. Thus, according to \cite[Theorem 8.1.9]{hermander}, the only contributions to the wave front sets can stem from points with stationary phase, i.e.,
\begin{eqnarray*}
 \WF\left( E_j^+ \right) &\subseteq &\left\{ \left( x,t; \partial_x \phi(x,t,\xi), \partial_t \phi(x,t,\xi) \right) \mid t > 0, \exists \, \xi \neq 0 \colon \partial
_{\xi }\phi \left( x,t,\xi \right) =0\right\}  \\
&=&\left\{ \left( 0,t,\xi ,0\right) \mid t > 0, \xi \neq 0\right\} = W_{0}.
\end{eqnarray*}

\paragraph{Variant (B)}  As a preparation we need a technical sublemma, which is very similar to Lemma 2.5  in \cite{HOZ16} (hence we do not repeat its proof). 

\textbf{Sublemma:} Let $\Gamma \subseteq \R^2$ (representing the $(\xi,\tau)$-plane) be the union of a closed disc around $(0,0)$ and a closed narrow cone containg the $\tau$-axis and being symmetric with respect to both axes. Let $\Gamma'$ be a closed set of the same shape as $\Gamma$, but with slightly larger disc and opening angle of the cone. Let $\tilde{b}\in S^{0}\left( \mathbb{R}^{2}\times \mathbb{R}^{2}\right)$ such that $\tilde{b}(x,t,\xi,\tau)$ is real, constant with respect to $(x,t)$, homogenous of degree $0$ with respect to $(\xi,\tau)$  away from the disc contained in $\Gamma'$, and such that $\tilde{b}(x,t,\xi,\tau) = 0$, if $(\xi,\tau) \in \Gamma$, $\tilde{b}(x,t,\xi,\tau) = 1$, if $(\xi,\tau) \not\in \Gamma'$. Then $p \, \tilde{b}$ is a symbol belonging to the class $S^{2}\left( \mathbb{R}^{2}\times
\mathbb{R}^{2}\right)$.

 We put $q := p \cdot \tilde{b}$ with $\tilde{b}$ as in the sublemma and denote by $Q$ be the pseudodifferential operator with symbol $q$. Then we have $Q E_j^+ = 0$ and therefore
$\WF(E_j^+) \subseteq \Char(Q)$ holds. Noting that $\Char(Q) \subseteq \R^2 \times (\Gamma'  \cup \{ (\xi,0) \mid \xi \neq 0 \} )$ and we may choose $\Gamma'$ arbitrarily narrow around the $\tau$-axis (for $|\tau| \geq 1$), we may conclude that 
\begin{equation}\label{CharRegE+}
 \WF(E_j^+) \subseteq \R^2 \times (\{(0,\tau) \mid \tau \neq 0\} 
	    \cup \{ (\xi,0) \mid \xi \neq 0 \}).
\end{equation}
As noted in Remark \ref{RemEReg}, the map $t \mapsto E_j^+(t)$ is weakly smooth $]0,\infty[ \to \S'(\R)$, hence by \cite[(23.65.5)]{diodone} no cotangent point $(x,t;0,\tau)$ can be contained in $\WF(E_j^+)$. Therefore, \eqref{CharRegE+} can be strenghtened to the inclusion
\begin{equation}\label{WFE+1}
 \WF(E_j^+) \subseteq \R^2 \times \{ (\xi,0) \mid \xi \neq 0 \}.
\end{equation}
As noted in Variant (A), $h_\alpha$ is a symbol in $S^0(\R^2\times\R^2)$ for $\xi\neq 0$ (due to $\alpha \geq 2$ in our hypothesis), thus $e_j^+(\xi,t)$ are in $S^0(\R^2\times R^2)$ (apart from a cut-off near $\xi = 0$) and therefore $\singsupp(E_j^+) \subseteq \{(0,t) \mid t > 0\}$ by the usual stationary phase argument. In conclusion, we obtain
$\WF(E_j^+) \subseteq W_0$.
\end{proof}

Based on Lemma \ref{fund-res} we will proceed to investigate
the influence of the singularities in the initial data $u_0$ and
$v_0$ on the wave front set of the solution $u$ to (\ref{Pu}). As a first result, we obtain a wave front set inclusion relation in the following proposition. The proof is an exact repetition of the first part of the proof of \cite[Theorem 2.4]{HOZ16} (up to Equation (13) there) and is based on \cite[Theorems 8.2.4 and 8.2.13]{hermander}. Hence we do not include the details here.

\begin{proposition} \label{June} Let $\alpha \geq 2$, $u_{0}, v_0\in \mathcal{E}^{\prime
}\left(\mathbb{R}\right)$,s and denote by $u^+$ the restriction of the solution $u$ according to Theorem \ref{solvability} to the half-space of future time $\R \times \, ]0,\infty[$, then 
\begin{equation*}
\WF\left( u^+ \right) \subseteq \left\{ \left( x,t ; \xi ,0\right) \mid t > 0,  \left(
x,\xi \right) \in WF\left( u_{0}\right) \text{ or }
  \left( x,\xi \right) \in WF\left( v_{0}\right) \right\}.
\end{equation*}
\end{proposition}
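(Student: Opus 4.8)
The plan is to combine the representation formula from Theorem~\ref{solvability} with the microlocal bound from Lemma~\ref{fund-res}, propagating singularities through the partial convolution in the spatial variable. Restricting the solution $u = u_0 \ast_x E_0 + v_0 \ast_x E_1$ to future time yields $u^+ = u_0 \ast_x E_0^+ + v_0 \ast_x E_1^+$, and since $\WF$ of a sum is contained in the union of the individual wave front sets, it suffices to bound each summand separately; I would describe the argument for $u_0 \ast_x E_0^+$, the term $v_0 \ast_x E_1^+$ being treated identically with $E_1^+$ in place of $E_0^+$. The idea is to regard $u_0 \mapsto u_0 \ast_x E_0^+$ as the operator $K$ with Schwartz kernel $\mathcal{K}((x,t),y) = E_0^+(x-y,\,t)$, which is legitimate as a map $\mathcal{E}'(\R) \to \D'(\R \times \,]0,\infty[)$ because $u_0$ has compact support, so that convergence and extension issues are automatic.

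First I would compute $\WF(\mathcal{K})$ by viewing $\mathcal{K}$ as the pullback $g^* E_0^+$ under the submersion $g \colon (x,t,y) \mapsto (x-y,\,t)$. Since $g$ is a submersion its normal set is trivial, so the pullback is unconditionally defined and \cite[Theorem 8.2.4]{hermander} applies: the transposed differential sends a covector $(\xi,\tau)$ at $(x-y,t)$ to $(\xi,\tau,-\xi)$ at $(x,t,y)$. Feeding in the bound $\WF(E_0^+) \subseteq W_0 = \{(0,s;\,\xi,0) \mid s > 0,\ \xi \neq 0\}$ from Lemma~\ref{fund-res} forces $x = y$, $\tau = 0$ and $\xi \neq 0$, whence
\[
  \WF(\mathcal{K}) \subseteq \{\,((y,t),\,y;\ \xi,\,0,\,-\xi) \mid t > 0,\ \xi \neq 0\,\}.
\]

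With this kernel bound I would then invoke the kernel theorem \cite[Theorem 8.2.13]{hermander}. One first checks the two applicability/reduction conditions: since every covector occurring in $\WF(\mathcal{K})$ has nonzero spatial component $\xi$ and nonzero $y$-component $-\xi$, the auxiliary projections $\WF_Y'(\mathcal{K})$ and $\WF_X'(\mathcal{K})$ are both empty. Emptiness of $\WF_Y'$ guarantees that $K$ maps $\mathcal{E}'(\R)$ into distributions, and emptiness of $\WF_X'$ kills the unconditional ``base'' contribution to the wave front set. The theorem then gives
\[
  \WF(Ku_0) \subseteq \{(x,t;\,\xi,\tau) \mid \exists\,(y,\eta)\in\WF(u_0)\colon ((x,t),y;\,\xi,\tau,\eta)\in\WF'(\mathcal{K})\},
\]
and matching this against the kernel bound (the sign flip of the $y$-covector being built into $\WF'$) pins down $x = y$, $\tau = 0$ and $\xi = \eta$, so that $(x,\xi) \in \WF(u_0)$. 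Hence $\WF(u_0 \ast_x E_0^+) \subseteq \{(x,t;\,\xi,0) \mid t>0,\ (x,\xi)\in\WF(u_0)\}$; the analogous computation for $v_0 \ast_x E_1^+$ and the union bound for sums then deliver the asserted inclusion.

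The analytic content here is light---compact support of $u_0, v_0$ removes all convergence questions, and Lemma~\ref{fund-res} has already done the genuinely microlocal work---so the main obstacle is the careful bookkeeping of cotangent directions: getting the signs in the transposed Jacobian and in the primed kernel relation $\WF'$ exactly right, and verifying that $\WF_X'(\mathcal{K})$ and $\WF_Y'(\mathcal{K})$ really do vanish so that no spurious contributions enter and the composition theorem genuinely applies. This is precisely why, as the authors note, the argument is an exact transcription of the corresponding step in the proof of \cite[Theorem 2.4]{HOZ16}.
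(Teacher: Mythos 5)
Your argument is correct and follows exactly the route the paper takes: it cites the first part of the proof of Theorem 2.4 in \cite{HOZ16}, which is precisely this combination of Lemma \ref{fund-res} with the pullback theorem \cite[Theorem 8.2.4]{hermander} for the kernel $E_j^+(x-y,t)$ and the kernel composition theorem \cite[Theorem 8.2.13]{hermander}. Your bookkeeping of the transposed differential, the vanishing of $\WF_X'(\mathcal{K})$ and $\WF_Y'(\mathcal{K})$, and the final matching $x=y$, $\tau=0$, $\xi=\eta$ is accurate, so nothing further is needed.
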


\begin{remark}\label{importantremark} (i) The result on the wave front set of $u^+$ in the above proposition implies, in particular, smoothness of $u^+$ considered as a map from time into distributions on space  (cf.\   \cite[(23.65.5)]{diodone}), i.e., $u^+ \in C^{\infty }( ]0,\infty[,\mathcal{D}^{\prime }(\mathbb{R}))$; in addition, we have $u^+(t) \in \mathcal{S}'(\R)$ for every $t > 0$.

(ii) We observe that $\partial_t E_1^+ = E_0^+$ implies
$$
   \WF (E_0^+) \subseteq \WF(E_1^+) \subseteq
      \WF(E_0^+) \cup \{ (x,t ; \xi,0) \mid t > 0, \xi \neq 0 \}, 
$$
where the right-most set corresponds to the characteristic set of $\partial_t$ when considered as partial differential operator on $\R \times \,]0,\infty[$.
\end{remark}

\begin{theorem} Let the hypothesis of Proposition \ref{June} hold. If $v_0$ is smooth then 
$$
  \WF\left( u^+ \right) = \left\{ \left( x,t ; \xi ,0\right) \mid t > 0,  \left(x,\xi \right) \in \WF\left( u_{0}\right) \right\},
$$
and similarly  if $u_0$ is smooth then
$$\WF\left( u^+ \right) = \left\{ \left( x,t ; \xi ,0\right) \mid t > 0, \left(x,\xi \right) \in \WF\left( v_{0}\right) \right\}.$$
\end{theorem}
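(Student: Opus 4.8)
The plan is to obtain the stated equalities as the conjunction of two inclusions. The inclusion $\subseteq$ is immediate from Proposition \ref{June}: if $v_0$ is smooth then $\WF(v_0)=\emptyset$, so the union on the right of Proposition \ref{June} collapses to $\{(x,t;\xi,0)\mid t>0,\ (x,\xi)\in\WF(u_0)\}$, and symmetrically if $u_0$ is smooth. Hence the whole task is the reverse inclusion $\supseteq$, namely that every $(x_0,t_0;\xi_0,0)$ with $t_0>0$ and $(x_0,\xi_0)\in\WF(u_0)$ genuinely lies in $\WF(u^+)$ (and the analogue for $v_0$). The idea I would use is that the initial datum can be \emph{reconstructed} microlocally from the solution at any single positive time, so a singularity of $u_0$ cannot be invisible in $u^+$.

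First I would set up the reconstruction on the spatial Fourier side. Writing $h=h_\alpha(\xi)$, the pair $(\widehat{u^+}(\xi,t),\partial_t\widehat{u^+}(\xi,t))$ is obtained from $(\widehat{u_0}(\xi),\widehat{v_0}(\xi))$ through the propagator
\[
  \begin{pmatrix}\widehat{u^+}(\xi,t)\\ \partial_t\widehat{u^+}(\xi,t)\end{pmatrix}
  = M(\xi,t)\begin{pmatrix}\widehat{u_0}(\xi)\\ \widehat{v_0}(\xi)\end{pmatrix},
  \qquad
  M(\xi,t)=\begin{pmatrix}\cos(ht) & \dfrac{\sin(ht)}{h}\\[2mm] -\,h\sin(ht) & \cos(ht)\end{pmatrix},
\]
which has $\det M(\xi,t)=1$ and is therefore pointwise invertible. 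Inverting at a fixed time $t=t_0>0$ gives $\widehat{u_0}(\xi)=\cos(h t_0)\,\widehat{u^+}(\xi,t_0)-\tfrac{\sin(h t_0)}{h}\,\partial_t\widehat{u^+}(\xi,t_0)$ and, likewise, $\widehat{v_0}(\xi)=h\sin(h t_0)\,\widehat{u^+}(\xi,t_0)+\cos(h t_0)\,\partial_t\widehat{u^+}(\xi,t_0)$. Because $\alpha\geq 2$ makes $h_\alpha$ a symbol of order $\leq 0$, the three multipliers $\cos(h_\alpha t_0)$, $\sin(h_\alpha t_0)/h_\alpha$ and $h_\alpha\sin(h_\alpha t_0)$ are (after a harmless cut-off near $\xi=0$, which is irrelevant for wave front sets) symbols of H\"{o}rmander type $(1,0)$, exactly as in Variant (A) of the proof of Lemma \ref{fund-res}. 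Thus $u_0=R_0\,u^+(\cdot,t_0)+R_1\,\partial_t u^+(\cdot,t_0)$ and $v_0=S_0\,u^+(\cdot,t_0)+S_1\,\partial_t u^+(\cdot,t_0)$ for pseudodifferential operators $R_j,S_j$.

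Next I would pass from the space--time wave front set to those of the time slices. By Proposition \ref{June} we already know $\WF(u^+)\subseteq\{\tau=0\}$, so the conormal $\{(x,t_0;0,\tau)\mid\tau\neq0\}$ of the hypersurface $\{t=t_0\}$ is disjoint from $\WF(u^+)$; hence the restriction $u^+(\cdot,t_0)$ is a well-defined distribution and H\"{o}rmander's restriction theorem \cite[Theorem 8.2.4]{hermander} yields $\WF(u^+(\cdot,t_0))\subseteq\{(x,\xi)\mid (x,t_0;\xi,0)\in\WF(u^+)\}$. The same applies to $\partial_t u^+$, using $\WF(\partial_t u^+)\subseteq\WF(u^+)$ together with the weak smoothness $u^+\in C^\infty(]0,\infty[,\mathcal{D}'(\R))$ (Remark \ref{importantremark}, \cite[(23.65.5)]{diodone}) to commute the time derivative with the restriction, so that $\WF(\partial_t u^+(\cdot,t_0))$ obeys the same bound. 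Invoking pseudolocality of $R_0,R_1$ then gives
\[
  \WF(u_0)\subseteq\WF(u^+(\cdot,t_0))\cup\WF(\partial_t u^+(\cdot,t_0))\subseteq\{(x,\xi)\mid (x,t_0;\xi,0)\in\WF(u^+)\}.
\]
Hence $(x_0,\xi_0)\in\WF(u_0)$ forces $(x_0,t_0;\xi_0,0)\in\WF(u^+)$ directly, and since $t_0>0$ is arbitrary this is exactly the missing inclusion $\supseteq$; combined with Proposition \ref{June} it proves the first equality. The second equality (case $u_0$ smooth) follows verbatim from the companion formula for $v_0$ and pseudolocality of $S_0,S_1$.

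I expect the main obstacle to be the verification that the reconstruction multipliers are genuine pseudodifferential operators of type $(1,0)$, so that pseudolocality may be applied: this is precisely where the hypothesis $\alpha\geq 2$ enters, as it forces $h_\alpha$ to have order $\leq 0$ and keeps $1/h_\alpha$ of finite (non-negative) order, paralleling the role of $\alpha\geq 2$ in Lemma \ref{fund-res}. A secondary point requiring care is that the restriction theorem supplies only an inclusion, so one must use the already-established structure $\WF(u^+)\subseteq\{\tau=0\}$ both to legitimize the trace onto $\{t=t_0\}$ and to identify the restricted covector as $\xi$; the behavior of the multipliers near $\xi=0$ plays no role, since wave front sets are governed by the conic behavior at infinity.
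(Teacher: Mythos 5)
Your argument is correct, but it takes a genuinely different route from the paper's. The paper obtains the missing inclusion $\supseteq$ by factorizing $P=\bar{Y}\,Y$ with $Y=-D_t+A_x^\alpha$, introducing the half-wave propagator $\tilde{E}(t)=\F^{-1}[e^{\mathrm{i}th_\alpha}]$ and $\tilde{u}=\tilde{E}\ast u_0$, computing $\WF(\tilde{u})$ exactly by propagation of singularities for the first-order factor, and then transferring these singularities to $u^+$ through the identity $\bar{Y}u^+=A_x^\alpha\tilde{u}$ and the microlocality statements (A) and (B) for the convolution operators $A_x^\alpha$ and $B_x^\alpha$. You instead invert the $2\times 2$ solution operator at a fixed time $t_0>0$ (its determinant is $1$), reconstruct $u_0$ and $v_0$ from the traces $u^+(\cdot,t_0)$ and $\partial_t u^+(\cdot,t_0)$ by order-zero Fourier multipliers, and combine the trace theorem with Proposition \ref{June}. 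This is shorter, avoids the factorization and the propagation-of-singularities machinery entirely, and in fact yields the stronger unconditional statement $\WF(u^+)=\{(x,t;\xi,0)\mid t>0,\ (x,\xi)\in\WF(u_0)\cup\WF(v_0)\}$, of which the two claims of the theorem are special cases; what it uses instead is the two-sided well-posedness of the evolution together with the a priori information $\WF(u^+)\subseteq\{\tau=0\}$ needed to legitimize the traces. Three points should be written out rather than asserted: (i) the slice $u^+(\cdot,t_0)$ in the sense of the smooth map $t\mapsto u(t)$ must be identified with the restriction in the sense of \cite[Theorem 8.2.4]{hermander}, which follows from $u^+\in C^\infty(]0,\infty[,\mathcal{D}'(\R))$ of Remark \ref{importantremark}; (ii) your reconstruction operators are Fourier multipliers, not properly supported pseudodifferential operators, so ``pseudolocality'' should be justified by the same kernel argument the paper uses for $A_x^\alpha$ and $B_x^\alpha$, namely $\singsupp\bigl(\F^{-1}(\text{symbol})\bigr)=\{0\}$ combined with \cite[Theorem 8.2.13]{hermander}; (iii) the cut-off near $\xi=0$ is indeed harmless because the discarded compactly supported, merely continuous piece of the multiplier acts on $\widehat{u^+}(\cdot,t_0)$, which is a continuous function of polynomial growth (as $\widehat{u_0},\widehat{v_0}$ are entire), and returns a $C^\infty$ function. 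With these details supplied, both routes rest on the same essential fact that $\alpha\geq 2$ makes $h_\alpha$ a symbol of order $\leq 0$, so that all multipliers involved, including $\sin(h_\alpha t_0)/h_\alpha$ and $h_\alpha\sin(h_\alpha t_0)$, lie in $S^0$.
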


\begin{proof} The overall strategy of the proof is the same as in that of \cite[Theorem 2.4]{HOZ16} and a part of the latter can, in fact, be taken over almost literally. We will indicate the corresponding step in the current proof and refer there for details to the earlier constructions in \cite{HOZ16}. 

Let $\tilde{E}(t)  := \mathcal{F}_{\xi \rightarrow x}^{-1} \left[ \exp\left(i t h_\alpha(\xi)\right) \right]$ and, for any $t \in \R$, put $\tilde{u}(t) := \tilde{E}(t) \ast u_0$. We have
$ D_t \widehat{\tilde{E}(t)} =
  \frac{1}{i}\partial _{t} \widehat{\tilde{E} (t)} =  
	h_\alpha(\xi)\, \mathrm{e}^{\mathrm{i}t h_\alpha(\xi)} = h_\alpha(\xi) \widehat{\tilde{E}(t) }$,
which implies
\begin{equation*}
 Y \tilde{E} :=  -D_{t}\tilde{E} +A_{x}^{\alpha}\tilde{E} =0,
 \quad \tilde{E}\left( 0\right) =\delta,
\end{equation*}
where $A_{x}^{\alpha}w = \mathcal{F}_{\xi \rightarrow
x}^{-1}\left[ h_\alpha(\xi) \right] \ast_x w$ with $w \in H^{-\infty}(\R)$. Moreover, $\tilde{u}$ solves the initial
value problem
\begin{equation}
 Y \tilde{u} =
 \left( -D_{t}+A_{x}^{\alpha}\right) \tilde{u}
=\left( -D_{t} \tilde{E} +A_{x}^{\alpha} \tilde{E} \right) \ast_x u_0 = 0,
  \quad \tilde{u}(0) =u_{0}. \tag{$*$}
\end{equation}
The function $h_\alpha^2(\xi)$ matches the symbol of $- E_x^\alpha \, \partial_x^2$, hence we obtain a commutative factorization of $P$ by $\left( D_{t}+A_{x}^{\alpha}\right) \left( -D_{t}+A_{x}^{\alpha}\right) v=-D_{t}^{2}v+A_{x}^{\alpha }A_{x}^{\alpha }v=\partial
_{t}^{2}v+\mathcal{F}_{\xi \rightarrow x}^{-1} (h_\alpha^2(\xi)) \ast_x v = Pv$, i.e., $P = \bar{Y} \cdot Y = Y \cdot \bar{Y}$, where we have put $\bar{Y} := D_{t}+A_{x}^{\alpha}$.

From here onward and using the same notation (except having now $P$ in place of $Z$), we may follow the detailed construction in the proof of Theorem 2.4 in \cite{HOZ16}, which is carried out between the equation also labeled ($*$) there and Equation (16) of \cite{HOZ16}. From that construction we only need to note one result following essentially from propagation of singularities for $\tilde{u}$ , namely
\begin{equation}
  \WF( \tilde{u}) = \{ (x,t;\xi ,0) \mid ( x,\xi ) \in \WF( u_{0}), t \in \R \} =: W_1,
\label{prop-sing}
\end{equation}
and, in addition, that in the open region with $t > 0$ we have
\begin{equation} \label{YA}
  \bar{Y} u^+ =\left( D_{t} E_0^+ +A_{x}^{\alpha }E_0^+ \right) \ast_x u_{0} =
 A_{x}^{\alpha}\tilde{E} \ast_x u_{0} =
  A_{x}^{\alpha}\tilde{u}.
\end{equation}

Let $\tilde{u}^+$ be the restriction of $\tilde{u}$ to the open half-plane $t > 0$. In the remainder the proof, which follows again the strategy for proving Theorem 2.4 in \cite{HOZ16}, but differs more substantially in the details, we will deduce from \eqref{YA} the relation
\begin{equation}
\WF (\tilde{u}^+) \subseteq \WF (u^+). \label{u-tilda-u}
\end{equation}
Once this is established, we obtain by \eqref{prop-sing} and Proposition \ref{June} that
$$
   W_1 \!\!\mid_{\{t > 0\}} = \WF(\tilde{u}^+) \subseteq \WF(u^+) 
	   \subseteq W_1 \!\!\mid_{\{t > 0\}},
$$
hence the theorem will be proved.

It remains to prove \eqref{u-tilda-u}. To begin with, we note that
\begin{equation} \tag{A}
   \WF(A_x^\alpha v) \subseteq \WF(v),
\end{equation} 
for any $v \in C(]0,\infty[, H^s(\R))$ implies 
\begin{equation} \tag{I}
   \WF(\bar{Y} u^+) \subseteq \WF(u^+),
\end{equation}
since $\bar{Y} = D_t + A_x^\sigma$ and $D_t$ clearly is a microlocal, i.e, $\WF(D_t v) \subseteq \WF(v)$ for any $v \in \mathcal{D}'(\R^2)$. As a second preparation, we aim for the result 
\begin{equation} \tag{II}
   \WF(\tilde{u}^+) \subseteq \WF(A_x^\alpha \tilde{u}^+),
\end{equation}
because this then yields
$$
   \WF(\tilde{u}^+) \subseteq \WF(A_x^\alpha \tilde{u}^+) = 
	   \WF(\bar{Y} u^+) \subseteq \WF(u^+),
$$
hence \eqref{u-tilda-u}. 

The proof of (II) will be based on the relation 
\begin{equation} \tag{B}
   \WF(B_x^\alpha v) \subseteq \WF(v)
\end{equation} 
for any $v \in C(]0,\infty[, H^s(\R))$, where 
$$
   B_x^\alpha v := \mathcal{F}_{\xi \to x}^{-1} \left( 
	     g_\alpha(\xi) \widehat{v(t)}(\xi) 	\right), 
			\quad g_\alpha(\xi) := \frac{i \xi}{h_\alpha(\xi)} = 
			i \,\text{sgn}(\xi) \sqrt{1 +a_\alpha |\xi|^\alpha}. 
$$
Indeed, first note that $A_x^\alpha B_x^\alpha v = B_x^\alpha A_x^\alpha v = \partial_x v$ and (B) gives
$$
	\WF(\partial_x \tilde{u}^+) = \WF(B_x^\alpha A_x^\alpha \tilde{u}^+) 
	   \subseteq \WF(A_x^\alpha \tilde{u}^+).
$$
Second, the map $t \mapsto \tilde{u}(t)$ clearly is weakly smooth, hence $\WF(\tilde{u}) \cap (\R^2 \times \{ (0,\tau) \mid \tau \neq 0 \}) = \emptyset$ (again by \cite[(23.65.5)]{diodone}) and $\WF(\tilde{u}) \subseteq \WF(\partial_x \tilde{u}) \cup (\R^2 \times \{ (0,\tau) \mid \tau \neq 0 \})$ by non-characteristic regularity, thus
$$
    \WF(\tilde{u}) = \WF(\partial_x \tilde{u}),
$$
which finally yields (II).

After all these preparations, our final tasks are to prove (A) and (B).

From the asymptotic properties of the symbols defining the operators $A_x^\alpha$ and $B_x^\alpha$, we have the following mapping properties with respect to spatial Sobolev regularity: $A_x^\alpha$ maps  $C(]0,\infty[,H^s(\R)) \to 
	    C(]0,\infty[,H^{s - 1 + \frac{\alpha}{2}}(\R))$ and $B_x^\alpha$ maps 
			$C(]0,\infty[,H^r(\R)) \to  C(]0,\infty[,H^{r - \frac{\alpha}{2}}(\R))$.

Let $H_\alpha := \mathcal{F}^{-1} h_\alpha$ and $G_\alpha := \mathcal{F}^{-1} g_\alpha$, then $A_x^\alpha v = (H_\alpha \otimes \delta) \ast v$ and $B_x^\alpha v = (G_\alpha \otimes \delta) \ast v$. Therefore, (A) and (B) is established, if we show that $\singsupp(H_\alpha) = \singsupp(G_\alpha) = \{ 0\}$, because then the convolution operators $A_x^\alpha$ and $B_x^\alpha$ do not enlarge the wave front sets, which follows along the line of arguments using the distribution kernels of the convolutions  exactly as in the proof of Equation (13) in \cite{HOZ16} (based on \cite[Theorem 8.2.13]{hermander}).

Observe that $G_\alpha = \frac{d}{dx} H_\alpha$ and $\frac{d}{dx}$ is an elliptic differential operator on $\R$, thus $\singsupp(H_\alpha) = \singsupp(G_\alpha)$. 

Let $\chi$ be smooth with compact support on $\R$ such that $\chi(\xi) = 1$ for $\xi$ in a neighborhood of $0$. Put $r_\alpha := (1 - \chi) g_\alpha$ and note that
$$
   G_\alpha = \F^{-1} g_\alpha = \F^{-1}(\chi g_\alpha) + \F^{-1} r_\alpha,
$$ 
where the first term on the right-hand side is smooth, since $\chi g_\alpha$ has compact support. Therefore
$\singsupp(G_\alpha) = \singsupp(\F^{-1} r_\alpha)$.

The function $r_\alpha$ is smooth and its derivatives are easily seen to yield symbol estimates (calculations are only required for $\xi \not\in \supp(\chi)$ and recall that $\alpha \geq 2$), thus $r_\alpha \in S^{\frac{\alpha}{2}}(\R\times \R)$. Moreover, for large $|\xi|$ we obtain   
$$
  |r_\alpha(\xi)| \geq \sqrt{\frac{|a_\alpha|}{2}} |\xi|^{\alpha/2} \geq c_0 (1 + |\xi|)^{\alpha/2},
$$
which shows that $r_\alpha$ is an elliptic symbol. Thus, we conclude from $\F^{-1} r_\alpha = r_\alpha(D) \delta$ that
$$
  \singsupp (\F^{-1} r_\alpha) = \singsupp(r_\alpha(D) \delta) = \singsupp(\delta) = \{0\}.
$$

\end{proof}

\section*{Acknowledgement}

This research is supported by project P25326 of the Austrian
Science Fund and by projects $174005$, $174024$ of the Serbian
Ministry of Education and Science, as well as by project
$114-451-2098$ of the Secretariat for Science of Vojvodina.


\begin{thebibliography}{10}

\bibitem{Challamel}
N.~Challamel, L.~Rakotomanana, and L.~Le Marrec.
\newblock A dispersive wave equation using nonlocal elasticity.
\newblock {\em Comptes Rendus M\'{e}canique}, 337:591--595, 2009.

\bibitem{CZAS}
N.~Challamel, D.~Zorica, T.~M.~Atanackovi\' c, and D.~T.~Spasi\' c.
\newblock On the fractional generalization of {E}ringen's nonlocal elasticity
  for wave propagation.
\newblock {\em Comptes Rendus M\'{e}canique}, 341:298--303, 2013.

\bibitem{diodone}
J.~Dieudonn\'{e}.
\newblock {\em Treatise on Analysis}, volume VIII.
\newblock Academic Press, Boston, 1993.

\bibitem{EltaherKhaterEmam}
M.~A. Eltaher, M.~E. Khater, and S.~A. Emam.
\newblock A review on nonlocal elastic models for bending, buckling,
  vibrations, and wave propagation of nanoscale beams.
\newblock {\em Applied Mathematical Modelling}, 40:4109--4128, 2016.

\bibitem{ErbayErbayErkip}
H.~A. Erbay, S.~Erbay, and A.~Erkip.
\newblock Existence and stability of traveling waves for a class of nonlocal
  nonlinear equations.
\newblock {\em Journal of Mathematical Analysis and Applications},
  425:307--336, 2015.

\bibitem{eringen1}
A.~C. Eringen.
\newblock On differential equations of nonlocal elasticity and solutions of
  screw dislocations and surface waves.
\newblock {\em Journal of Applied Physics}, 54:4703--4710, 1983.

\bibitem{HOZ16}
G.~H\"orman, Lj. Oparnica, and D.~Zorica.
\newblock Microlocal analysis of fractiona wave equations.
\newblock {\em Zeitschrift f\"{u}r angewandte Mathematik und Mechanik},
  97:217--225, 2017.

\bibitem{hermander-paper}
L.~H\"{o}rmander.
\newblock Fourier integral operators. {I}.
\newblock {\em Acta Mathematica}, 127:79--183, 1971.

\bibitem{hermander}
L.~H\"{o}rmander.
\newblock {\em The Analysis of Linear Partial Differential Operators. Volumes
  I-IV}.
\newblock Springer-Verlag, Berlin, 1983-1994.

\bibitem{LangStannat}
E.~Lang and W.~Stannat.
\newblock ${L}^2$-stability of traveling wave solutions to nonlocal evolution
  equations.
\newblock {\em Journal of Differential Equations}, 261:4275--4297, 2016.

\bibitem{TongYuHuShiXu}
L.~Tong, Y.~Yu, W.~Hu, Y.~Shi, and C.~Xu.
\newblock On wave propagation characteristics in fluid saturated porous
  materials by a nonlocal {B}iot theory.
\newblock {\em Journal of Sound and Vibration}, 379:106--118, 2016.

\bibitem{TrofimchukPintoTrofimchuk}
E.~Trofimchuk, M.~Pinto, and S.~Trofimchuk.
\newblock Monotone waves for non-monotone and non-local monostable
  reaction-diffusion equations.
\newblock {\em Journal of Differential Equations}, 261:1203--1236, 2016.

\bibitem{Volpert}
V.~Volpert.
\newblock Pulses and waves for a bistable nonlocal reaction-diffusion equation.
\newblock {\em Applied Mathematics Letters}, 44:21--25, 2015.

\end{thebibliography}

\end{document}